\documentclass[11pt]{amsart}
\usepackage{amsmath}
\usepackage{amsthm}
\usepackage{amscd}
\usepackage{amssymb}
\usepackage{amsfonts}
\usepackage{graphics}
\usepackage{graphicx}
\usepackage{color}
\usepackage[dvipsnames]{xcolor}
\usepackage{comment}
\usepackage[normalem]{ulem}
\date{}

\usepackage{enumitem}

\usepackage{upgreek}

\usepackage{mathrsfs}

\usepackage{cancel}

\newlength{\defbaselineskip }
 \long\def\salta#1{\relax}

\usepackage{hyperref}

 \theoremstyle{plain}
\newtheorem{theorem}{Theorem}[section]
\newtheorem{proposition}[theorem]{Proposition}
\newtheorem{lemma}[theorem]{Lemma}

\theoremstyle{definition}
\newtheorem{definition}[theorem]{Definition}

\newtheorem{remark}[theorem]{Remark}

\newcommand{\re}{\mathbb{R}}

\newcommand{\dis}{\displaystyle}

\def\wr{W^{-1,r'}(\Omega)}
\def\t1p0{T^{1,p}_{0}(\Omega)}

\def\m2{M^{\frac{N(p-1)}{N-1}}(\Omega)}

\def\sobr{W^{1,r}_{0}(\Omega)}

\def\into{\int_{\Omega}}

\def\w-1p'{W^{-1,p'}(\Omega)}
\def\pw-1p'u{L^{p'}(0,1;W^{-1,p'}(\Omega))}

\def\lp'n{(L^{p'}(\Omega))^{N}}

\newcommand{\rnk}[1]{\operatorname{\mathrm{rank}}{\,#1}}

\title[Anisotropic quasilinear elliptic systems]{Topological invariants for anisotropic quasilinear elliptic systems:\\ A Poincar\'e-Hopf formula}
\author[N. Borgia]{Natalino Borgia}
\address{Dipartimento di  Matematica  \\ Universit\`{a} degli Studi di Bari Aldo Moro \\ Via Orabona 4\\ 70125 Bari, Italy}

\begin{document}

\begin{abstract}
We consider the functional  $I_{\delta,\Psi_1,\Psi_2}:X \to \mathbb{R}$ defined for any $z=(u,v) \in X$ as
\begin{align*}
	I_{\delta,\Psi_1,\Psi_2}(z) 
	& =  \into  \Psi_1(\nabla u) \, dx + \into \Psi_2(\nabla v ) \, dx  \\
	& \quad - \into
	H(\delta,x,u(x),v(x)) \,dx,    
\end{align*}
where $\Omega$ is a smooth bounded domain of $\mathbb{R}^N$, $\Psi_1, \Psi_2: \mathbb{R}^N \to \mathbb{R}$ are convex functions satisfying suitable conditions for $1 < p, q < N$, and the nonlinearity $H$ is allowed to exhibit both subcritical and critical growth and may depend on a parameter $\delta \in I \subseteq \mathbb{R}$. Here, the functional space setting is given by the product space $X :=W_0^{1,p}(\Omega)\times
W_0^{1,q}(\Omega)$ equipped with the norm
$\|z\|= \|u\|_{1,p} + \|v\|_{1,q}$ for any 
$z=(u,v)\in X$, where $\| \cdot \|_{1,s}$ is the usual  norm in $W^{1,s}_0(\Omega)$. 
In this paper we prove that $I_{\delta,\Psi_1,\Psi_2}'$ is of class $(S)_+$ and,  from \cite[Theorem 1.1]{CD1}, we infer that each isolated 
critical point of $I_{\delta,\Psi_1,\Psi_2}$ has critical
 groups of finite type and a  Poincar\'e-Hopf formula holds.
\end{abstract}

\keywords{Quasilinear Elliptic Systems,  Critical groups,  Poincar\'e-Hopf formula,
Degree theory, Critical growth, Banach spaces, Anisotropic operator}

\subjclass[2000]{58E05, 35J50, 35J60, 35J62, 35J92, 47H11}

\date{\today}

\maketitle

\section{INTRODUCTION}

\medskip
\noindent
In the context of Hilbert spaces, the celebrated Poincaré–Hopf formula has been extended to functionals $f$ whose gradient $\nabla f$ is a compact perturbation of the identity. This framework allows one to employ the Leray–Schauder degree theory rather than the classical Brouwer degree for $C^1$ functions $f \colon \mathbb{R}^N \to \mathbb{R}$ (see, e.g., \cite[Theorem~8.5]{mawhin_willem1989}, \cite[Theorem~3.2]{chang}, \cite[Theorem~3.2]{li_li_liu2005}, \cite[Theorem~1.3]{bartsch_dancer2009}, and \cite{benci1991}).

Conversely, if $Y$ is a Banach space (not necessarily Hilbert), the derivative $f'$ is naturally defined from $Y$ into its dual $Y'$, which departs from the standard setting of the Leray–Schauder degree. In \cite{kim_wang1989}, Kim and Wang obtained a Poincaré–Hopf type result for the specific case where the Banach space is densely embedded in a Hilbert space, and the functional $f$ induces a vector field $\nabla f$ on the Banach space via the inner product of the Hilbert space. They achieved this by applying extensions of the Leray–Schauder degree for maps acting from the Banach space into itself.

Recently, in \cite{CD1}, Cingolani and Degiovanni established a Poincaré–Hopf formula in a general reflexive Banach space $Y$. Their main approach was to replace the Leray–Schauder degree with the Browder degree developed in \cite{browder1983} for demicontinuous maps (i.e., continuous from the strong topology of the domain to the weak topology of the codomain) that satisfy the $(S)_+$ condition, meaning that:

\begin{definition} 
Let $(Y, \lVert \cdot \rVert)$ be a reflexive Banach space, whose dual is denoted by $Y'$. Let $D\subseteq Y$ and let us consider a map $F:D\longrightarrow Y'$. $F$ is said to be \emph{of class~$(S)_+$} if, for every sequence $\{w_k\}_k$ in $D$ weakly convergent to $w$ in $Y$ with
		\[
		\limsup_{k \to + \infty} \,\langle F(w_k),w_k-w\rangle \leq 0\,,
		\]
we have $\|w_k-w\|\to 0$.
\end{definition}

We notice that if $Y$ is a Hilbert space and $Y'$ is identified with $Y$ in
the standard way, then any compact perturbation of the identity
is of class~$(S)_+$. However, the class~$(S)_+$ allows one to consider further
interesting cases, also in the Hilbert setting.

As noticed in~\cite{CD1,degiovanni2009}, any compact perturbation of a map of
class~$(S)_+$ is still of class~$(S)_+$. 
Therefore, the class includes the perturbations of the
$p$-Laplace operator by terms with subcritical growth. 
We refer to~\cite[Theorem~10]{dinca_jebelean_mawhin2001}, where $-\Delta_p:W^{1,p}_0(\Omega)\longrightarrow W^{-1,p'}(\Omega)$
is continuous and of class~$(S)_+$. 
In~\cite{CD1}, the case with critical growth has been covered, which is interesting also for $p=2$.
We also mention the paper~\cite{ad}, where Almi and Degiovanni showed how the degree for maps of class~$(S)_+$ can be used to
define, by a suitable approximation technique, a degree for quasilinear elliptic
equations with natural growth conditions.

In this paper we  prove the finiteness of the critical groups and a local Poincar\'e-Hopf type formula for the Euler functional associated to the following system
\begin{equation}\label{AnisotropicSystem}
	\begin{cases}
		\begin{array}{ll}
			-\text{\rm div}  \; [ \nabla \Psi_1 \left( \nabla u  \right)] = H_s(\delta,x, u,v) & \hbox{in} \ \Omega,
			\medskip \\
			-\text{\rm div} \; [ \nabla \Psi_2 \left( \nabla v  \right)]= H_t(\delta,x,u,v) & \hbox{in} \ \Omega, \medskip\\
			u=v=0  & \hbox{on} \ \partial\Omega,
		\end{array}
	\end{cases}
\end{equation}
where $\Omega$ is a smooth bounded domain of $\mathbb{R}^N$,  $ N \geq 2 $.\\
Denoting by $I$ an interval of $\mathbb{R}$, the  function $H:I\times \Omega \times \re^2 \to \mathbb{R}$  is such that $H(\delta,x,\cdot,\cdot) \in C^1(\re^2, \re)$ for any $\delta \in I$ and for a.e. $x\in \Omega$, $H_s(\delta, \cdot,s,t)$ and $H_t(\delta, \cdot,s,t)$ are measurable for every $(\delta,s,t) \in I\times \re^2$, and $H(\delta, \cdot ,0,0)$ belongs to $ L^{1}(\Omega)$ for any $\delta \in I$.\\
Moreover, we assume:
\begin{itemize}
\item[$(\Psi)$] the functions $\Psi_{1}, \Psi_{2}: \mathbb{R}^N \to \mathbb{R}$ are of class $C^1$ with $\Psi_1(0)=0$, $\nabla \Psi_1(0)=0$  and $\Psi_2(0)=0$, $\nabla \Psi_2(0)=0$. Moreover, given $\gamma \geq 0,$  $r >1$ and denoting by $\Psi_{r,\gamma}: \mathbb{R}^N \to \mathbb{R}$ the function defined as
\begin{equation}\label{rArea}
	\displaystyle \Psi_{r,\gamma}(\xi):= \frac{1}{r} \biggl[ \left( \gamma^2 +
	\left| \xi \right|^2\right)^{ \frac{r}{2}} - \gamma^r \biggr] ,
\end{equation}	
we assume that there exist $\alpha \geq 0$, $1<p<N$ and $ \frac{1}{p} < \nu_1 \leq C_1$ such that $ \displaystyle \left( \Psi_1 - \nu_1 \Psi_{p,\alpha}    \right)$  and  $ \displaystyle \left( C_1 \Psi_{p,\alpha} - \Psi_1   \right)$  are both convex, and there exist $\beta \geq 0$, $1<q<N$ and $ \frac{1}{q} < \nu_2 \leq C_2$ such that $ \displaystyle \left( \Psi_2 - \nu_2 \Psi_{q,\beta}    \right)$ and $ \displaystyle \left( C_2 \Psi_{q,\beta}   - \Psi_2 \right)$ are both convex.\\
\item[$(H)$]
	there exists $C>0$ such that
	\[|H_s(\delta,x,s,t)|\leq C \left( 1 + |s|^{p^*-1} +|t|^{q^*\frac{p^*-1}{p^*}}   \right)
	\]
	\[|H_t(\delta,x,s,t)|\leq C \left( 1 + |s|^{p^*\frac{q^*-1}{q^*}}+ |t|^{q^*-1}  \right),
	\]
	for a.e. $x\in \Omega$ and every $(\delta,s,t)\in I\times \re^2$, where $p^*:=Np/(N-p)$ and $q^*:=Nq/(N-q)$ are the critical Sobolev exponents of $p$ and $q$ respectively, where $p$ and $q$ are introduced in assumption $(\Psi)$.

\end{itemize}

\medskip
\noindent
The space setting is the product space $X:= W_0^{1,p}(\Omega)\times W_0^{1,q}(\Omega)$ equipped with the norm
\begin{equation}\label{normaX}
\|z\|= \|u\|_{1,p} + \|v\|_{1,q}, \quad z=(u,v)\in X,
\end{equation}
where  $\| \cdot \|_{1,r}$ denotes the usual gradient norm in $W^{1,r}_0(\Omega)$. We will also denote by $\| \cdot \|_{r}$ the usual norm in $L^r(\Omega)$. The dual space of $W^{1,r}_0(\Omega)$ will be denoted by $W^{-1,r'}(\Omega)$, while the dual space of $X$ will be denoted by $X'$.

Condition $(\Psi)$ was introduced in \cite{CDV}, where Cingolani, Degiovanni and Vannella took into account the scalar version of system \eqref{AnisotropicSystem}, considering a nonlinearity that is independent of the parameter $\delta$ and is allowed to exhibit critical growth.  In particular, assumption $(\Psi)$ involves various anisotropic operators. For instance, we mention the interesting papers \cite{ACCFM,ACF} in which, letting $ B(t):= t^p/p \text{ for } t>0,$  Antonini, Cianchi, Ciraolo, Farina and Maz'ya considered  
$$ \displaystyle  \Psi_1=B \circ \mathscr{H},$$
where the norm $ \mathscr{H}: \mathbb{R}^N \to \mathbb{R}$ (different from the Euclidean norm) is of class $C^2$ in $\mathbb{R}^N \setminus \{O \} $, and  its anisotropic unit ball is uniformly convex (see for instance the paper \cite{CFV} of Cozzi, Farina and Valdinoci). Notice that if $\mathscr{H}$ is the Euclidean norm $\mathscr{H}(\xi)=|\xi|$, then we are in the isotropic case, and choosing $ B(t):= t^p/p \text{ for } t>0,$  we get $-\text{\rm div}  \; [ \nabla \Psi_1 \left( \nabla u  \right)]= -\Delta_p u$, hence we recover the $p$-Laplacian.  From a mathematical point of view, the study of anisotropic operators is interesting since the lack of isotropy produces a richer geometric structure. From a physical point of view, anisotropic operators and setting arise in different contexts, such as surface energy (see e.g. \cite{GIGA}) crystallography (see e.g. \cite{TAYLOR1,TAYLOR2,WULFF}), thermodynamics (see e.g. \cite{GURTIN}), noise-removal procedures in digital image processing (see e.g. \cite{ESE}).

It is immediate to see that, for any $1<p<N,$ $\alpha \geq 0$ and $1<q<N,$ $\beta \geq 0$, the functions $\Psi_1 =\Psi_{p,\alpha}$ and $\Psi_2 =\Psi_{q,\beta}$ satisfy assumption $(\Psi)$ (it is sufficient to take $\nu_1=C_1=1$ and $\nu_2=C_2=1$, respectively).\\
Therefore, considering that
\begin{equation*}
\displaystyle  
\nabla \Psi_{r,\gamma}(\xi)=
\begin{cases}
\left( \gamma^2 +
	\left| \xi \right|^2\right)^{ \frac{r-2}{2}} \xi  & \text{ if } \xi \neq 0, \bigskip \\
0 & \text{ if } \xi=0,
\end{cases}
\end{equation*}
the study of system \eqref{AnisotropicSystem} includes systems involving the $p$- and $q$-Laplacian operators, or $p$-area and $q$-area type operators, that is
\begin{equation*}
	\begin{cases}
		\begin{array}{ll}
			-\text{\rm div} \left( (\alpha^2+|\nabla u|^2)^{\frac{p-2}{2}}\nabla u\right) = H_s(\delta,x, u,v) & \hbox{in} \ \Omega;
			 \medskip \\
			-\text{\rm div} \left( (\beta^2+|\nabla v|^{2})^{\frac{q-2}{2}}\nabla v\right)= H_t(\delta,x,u,v) & \hbox{in} \ \Omega; \medskip \\
			u=v=0  & \hbox{on} \ \partial\Omega.
		\end{array}
	\end{cases}
\end{equation*}
For existence results, we mention for example \cite{ABC,BCV,CSS,denapolimarani,GPZ,MP}.\\
Systems involving this kind of quasilinear operators 
model some phenomena in non-Newtonian mechanics, nonlinear elasticity and glaciology, combustion theory, population biology (see \cite{boccardodefiguerido,diazthelin, glow, manamaw,marcellini1}).

Assumptions $(\Psi)$ and $(H)$ and Nemytskii operator theory give that the Euler functional $I_{\delta,\Psi_1,\Psi_2}:X \to \mathbb{R}$ associated to \eqref{AnisotropicSystem} and defined as
\begin{align}\label{funzionale0}
	I_{\delta,\Psi_1,\Psi_2}(z) :=  \into  \Psi_1(\nabla u) \, dx + \into \Psi_2(\nabla v ) \, dx 
	- \into H(\delta,x,u,v) \, dx, 
\end{align}
is of class $C^1$ on $X$.

As in \cite{CD1} for the $p$-Laplace operator (see also \cite{ad,CDV}), we show that also the case with critical growth for the functional $I_{\delta,\Psi_1,\Psi_2}$ gives rise locally to an operator of class $(S)_+$. 
However, in our setting we have to face some additional difficulties in order to show that the Fréchet derivative $I'_{\delta,\Psi_1,\Psi_2}$ of the Euler functional associated to a quasilinear system is locally an operator  of class $(S)_+$. About the nonlinearity $H$, in a quasilinear system setting  new difficulties arise in computations due to the coupling of $u$ with $v$, and retracing the idea contained in \cite{BCV2},
 we are able to manage them also when $H$ is allowed to grow critically, and depending on $x \in \Omega$ and $\delta \in I$. 

Precisely, we will prove the following:

\begin{theorem}\label{IprimodeltaPsi1Psi2S+}
Let  $H:I\times \Omega \times \re^2 \to \mathbb{R}$  be a nonlinearity satisfying assumption $(\mathcal{H})$, and let $\Psi_{1}, \Psi_{2}: \mathbb{R}^N \to \mathbb{R}$ two functions satisfying $(\Psi)$. Considering the $C^1$-functional $I_{\delta,\Psi_1,\Psi_2}$ defined in \eqref{funzionale0}, there exists a radius $R:=R(N,p,q,C,\nu_1,\nu_2)$ such that, for every $z_0=(u_0,v_0) \in X$, the map $I'_{\delta,\Psi_1,\Psi_2}:X \to X'$ is of class $(S)_+$ on 
$$ \displaystyle \overline{B_R(z_0)}:=\left\lbrace z \in X \; : \; \lVert z - z_0 \rVert \leq R \right\rbrace.$$
\end{theorem}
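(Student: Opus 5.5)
Take $z_k=(u_k,v_k)\rightharpoonup z=(u,v)$ in $\overline{B_R(z_0)}$ with $\limsup_k \langle I'_{\delta,\Psi_1,\Psi_2}(z_k),z_k-z\rangle\le 0$. The plan is to show that the principal part gives a quantitative coercive gain, while the critical part is controlled by it as long as $R$ is small.

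\textbf{Step 1: the principal part.} Write
\[
\langle I'(z_k),z_k-z\rangle=\into \nabla\Psi_1(\nabla u_k)\cdot\nabla(u_k-u)\,dx+\into \nabla\Psi_2(\nabla v_k)\cdot\nabla(v_k-v)\,dx-\into\big[H_s(\delta,x,u_k,v_k)(u_k-u)+H_t(\delta,x,u_k,v_k)(v_k-v)\big]dx.
\]
Since $\Psi_1-\nu_1\Psi_{p,\alpha}$ is convex, monotonicity of its gradient gives
\[
\into(\nabla\Psi_1(\nabla u_k)-\nabla\Psi_1(\nabla u))\cdot\nabla(u_k-u)\ge \nu_1\into(\nabla\Psi_{p,\alpha}(\nabla u_k)-\nabla\Psi_{p,\alpha}(\nabla u))\cdot\nabla(u_k-u).
\]
The term $\into\nabla\Psi_1(\nabla u)\cdot\nabla(u_k-u)$ tends to $0$ by weak convergence. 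Indeed, $\nabla\Psi_1(\nabla u)\in L^{p'}$, which follows from the convexity of $C_1\Psi_{p,\alpha}-\Psi_1$ and the growth of convex functions.

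Next, use the standard inequalities for $\Psi_{p,\alpha}$: for $p\ge2$ one has $\ge c_p|\nabla(u_k-u)|^p$, and for $p<2$ a Hölder-type bound with $\|u_k\|,\|u\|$. Combined with $(S)_+$ of the model operator, this gives $\nabla u_k\to\nabla u$ a.e. along a subsequence. The same holds for $v_k$.

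A more convenient route for absorbing the critical term is the following. From convexity one gets $\Psi_1(\xi)\ge \nu_1\Psi_{p,\alpha}(\xi)$-type lower bounds. Via Brezis--Lieb, and using the a.e. convergence of gradients obtained above, this yields
\[
\liminf_k \into \nabla\Psi_1(\nabla u_k)\cdot\nabla(u_k-u)\ge \nu_1\limsup_k\|\nabla(u_k-u)\|_p^p \quad\text{up to } o(1).
\]
Here $\nu_1 p>1$ appears naturally from $\nabla\Psi_{p,\alpha}(\xi)\cdot\xi\ge p\Psi_{p,\alpha}(\xi)$.

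\textbf{Step 2: splitting the nonlinearity.} Decompose $H_s$ into a subcritical part and the critical part $C(|s|^{p^*-1}+|t|^{q^*(p^*-1)/p^*})$. The subcritical part is compact by Rellich and Vitali, so its contribution to $\into H_s(u_k-u)$ tends to $0$.

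For the critical part, set $w_k=u_k-u$ and $\omega_k=v_k-v$. Brezis--Lieb splitting, using $u_k\to u$ a.e., reduces the limit to
\[
\limsup_k\into\big(|w_k|^{p^*}+|\omega_k|^{q^*\frac{p^*-1}{p^*}}|w_k|\big).
\]
Young's inequality bounds the mixed term by $\|w_k\|_{p^*}^{p^*}+\|\omega_k\|_{q^*}^{q^*}$; this is where the exponents in $(H)$ are designed to fit, as in \cite{BCV2}. By Sobolev, $\|w_k\|_{p^*}^{p^*}\le S^{-p^*/p}\|w_k\|_{1,p}^{p^*-p}\|w_k\|_{1,p}^p$, and $\|w_k\|_{1,p}\le 2R$ since $z_k,z\in\overline{B_R(z_0)}$.

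\textbf{Step 3: absorption.} Combining the steps gives
\[
0\ge\limsup_k\Big[(\nu_1-C'R^{p^*-p})\|w_k\|_{1,p}^p+(\nu_2-C'R^{q^*-q})\|\omega_k\|_{1,q}^q\Big].
\]
Choose $R=R(N,p,q,C,\nu_1,\nu_2)$ so that both coefficients are positive. Then $w_k\to0$ in $W^{1,p}_0$ and $\omega_k\to0$ in $W^{1,q}_0$, and the usual subsequence argument extends this to the whole sequence.

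\textbf{Main obstacle.} I expect the hardest part to be the rigorous Brezis--Lieb-type splitting of both the anisotropic principal term and the coupled critical term. This requires a.e. convergence of $\nabla u_k$, which I would derive first, following \cite{CDV} and \cite{BCV2}.
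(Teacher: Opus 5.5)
There is a genuine gap in Step~1. Your coercive bound $\into\nabla\Psi_1(\nabla u_k)\cdot\nabla(u_k-u)\,dx\ge\nu_1\|\nabla(u_k-u)\|_p^p+o(1)$ comes from Brezis--Lieb, so it needs $\nabla u_k\to\nabla u$ a.e. Your way of obtaining this is circular. Applying $(S)_+$ of $-\mathrm{div}\,\nabla\Psi_{p,\alpha}$ requires $\limsup_k\into\nabla\Psi_{p,\alpha}(\nabla u_k)\cdot\nabla(u_k-u)\,dx\le0$. You only know that once $\into[H_s(\delta,x,u_k,v_k)(u_k-u)+H_t(\delta,x,u_k,v_k)(v_k-v)]\,dx\to0$, which is exactly what critical growth prevents a priori and what the theorem has to establish.

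A Boccardo--Murat argument is not available either. The hypothesis controls only the single pairing $\langle I'_{\delta,\Psi_1,\Psi_2}(z_k),z_k-z\rangle$; there is no equation to test against $\mathcal{T}_\varepsilon(u_k-u)$. So a.e.\ convergence of gradients is a consequence of the theorem, not an input.

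Without it, for $p,q\ge2$ you could repair Step~1 with the pointwise inequality $(\nabla\Psi_{p,\alpha}(\xi_1)-\nabla\Psi_{p,\alpha}(\xi_2))\cdot(\xi_1-\xi_2)\ge c_p|\xi_1-\xi_2|^p$, after which your Steps~2--3 go through. For $1<p<2$, which $(\Psi)$ allows, the available monotonicity estimate only gives a lower bound $c\,\|\nabla(u_k-u)\|_p^2$ with $c$ depending on $\|z_0\|$ (and on $\alpha$, $|\Omega|$). This breaks the uniformity $R=R(N,p,q,C,\nu_1,\nu_2)$. When $p^*\le2$ it cannot absorb $\|u_k-u\|_{1,p}^{p^*}$ at all.

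The paper never uses gradient convergence.
\begin{itemize}
\item Convexity of $w\mapsto\frac1p\into(\alpha^2+|\nabla w|^2)^{p/2}dx$ bounds the principal part from below by a difference of energies.
\item The splitting $u_k=\mathcal{T}_h(u_k)+\mathcal{R}_h(u_k)$ splits this energy exactly, because the two gradients have disjoint supports.
\item The $\mathcal{T}_h$ pieces are handled by weak lower semicontinuity and compactness. What remains is the coercive quantity $\frac{\nu_1}{p}\|\nabla\mathcal{R}_h(u_k)\|_p^p\ge\frac{\nu_1S(p,N)}{p}\|\mathcal{R}_h(u_k)\|_{p^*}^p$.
\item The critical terms are bounded by $K\|u_k-u_0\|_{p^*}^{p^*-p}\|\mathcal{R}_h(u_k)\|_{p^*}^p$, analogous coupled terms, and quantities that vanish as $h\to\infty$.
\item Smallness of $R$ then gives $\lim_h\limsup_k\|\mathcal{R}_h(u_k)\|_{p^*}=0$. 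Hence $u_k\to u$ in $L^{p^*}$ and $v_k\to v$ in $L^{q^*}$, the nonlinear term vanishes, and Proposition~\ref{HPsi1Psi2S+} concludes.
\end{itemize}

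If you want to keep your global structure, Lions' concentration--compactness lemma would also avoid gradient convergence. The atoms $a_j$ of the weak limit of $|u_k-u|^{p^*}$ satisfy $a_j\le S(p,N)^{-p^*/p}b_j^{p^*/p}$, where $b_j$ are the atoms of the weak limit of $|\nabla u_k|^p$. Since $\|u_k-u_0\|_{1,p}\le R$, each $b_j$ is at most $R^p$, so these atoms can be absorbed for $R$ small.
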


Let us now recall that  for any $q \in \mathbb{N}$, the $q$-th critical group of a functional $f$ at $\bar{u} \in \text{Dom} f$ is defined as
$$ \dis
C_q(f,\bar{u}) =
H_q\left(\{f\leq f(\bar{u})\},\{f\leq f(\bar{u})\}\setminus\{\bar{u}\}\right)\,,
$$
where $H_*$ denotes singular homology (see e.g.~\cite{chang1981,chang, mawhin_willem1989,rybakowski1987,spa}).

As a straightforward consequence of Theorem \ref{IprimodeltaPsi1Psi2S+} and \cite[Theorem 1.1]{CD1},   we also obtain the finiteness of the critical groups and a local Poincar\'e-Hopf  formula for the functional $I_{\delta,\Psi_1,\Psi_2}$, that is:

\begin{theorem}\label{TheorFinGrupCrit}
If $\bar{z}$ is an isolated critical point of the functional $I_{\delta,\Psi_1,\Psi_2}$ defined in \eqref{funzionale0}, then $C_*(I_{\delta,\Psi_1,\Psi_2},\bar{z})$ is of finite type and we have
\begin{equation*}
\mathrm{deg}(I_{\delta,\Psi_1,\Psi_2}',V,0) = \sum_{m\geq 0}
(-1)^m\,\rnk{C_m(I_{\delta,\Psi_1,\Psi_2},\bar{z})}
\end{equation*}
for every sufficiently small neighborhood $V$ of $\bar{z}$.
\end{theorem}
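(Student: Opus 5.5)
The plan is to reduce Theorem \ref{TheorFinGrupCrit} to \cite[Theorem 1.1]{CD1}. That result says the following. Let $f$ be a $C^1$ functional on a reflexive Banach space $Y$ and $\bar{z}$ an isolated critical point of $f$. Suppose $f'$ is of class $(S)_+$ on some closed ball centred at $\bar{z}$; more precisely, we need $f'$ defined and of class $(S)_+$ on a neighborhood of $\bar z$. Then $C_*(f,\bar z)$ is of finite type and $\mathrm{deg}(f',V,0)=\sum_m(-1)^m\rnk{C_m(f,\bar z)}$ for every sufficiently small neighborhood $V$ of $\bar z$, where $\mathrm{deg}$ is the Browder degree.

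So we only need to check the hypotheses with $Y=X$ and $f=I_{\delta,\Psi_1,\Psi_2}$, in this order:
\begin{enumerate}
\item $X=W_0^{1,p}(\Omega)\times W_0^{1,q}(\Omega)$ with the norm \eqref{normaX} is a reflexive Banach space, since each factor is reflexive for $1<p,q<N$ and a finite product of reflexive spaces is reflexive.
\item $I_{\delta,\Psi_1,\Psi_2}$ is of class $C^1$ on $X$. This is stated in the paper as a consequence of $(\Psi)$, $(H)$ and Nemytskii operator theory. In particular $I'_{\delta,\Psi_1,\Psi_2}:X\to X'$ is continuous, hence demicontinuous, as the Browder degree requires.
\item Apply Theorem \ref{IprimodeltaPsi1Psi2S+} with $z_0=\bar z$. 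It gives $R>0$, independent of the centre, such that $I'_{\delta,\Psi_1,\Psi_2}$ is of class $(S)_+$ on $\overline{B_R(\bar z)}$.
\end{enumerate}

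Since $\bar z$ is an isolated critical point, there is $\rho\in(0,R]$ such that $\bar z$ is the only zero of $I'_{\delta,\Psi_1,\Psi_2}$ in $\overline{B_\rho(\bar z)}$. If $V\subseteq B_\rho(\bar z)$ is an open neighborhood of $\bar z$, then:
\begin{itemize}
\item the restriction of $I'_{\delta,\Psi_1,\Psi_2}$ to $\overline V$ is still of class $(S)_+$, because the $(S)_+$ property passes to subsets: any sequence in $\overline V$ weakly converging to $w$ is also a sequence in $\overline{B_R(\bar z)}$;
\item $I'_{\delta,\Psi_1,\Psi_2}$ has no zeros on $\partial V$.
\end{itemize}
Hence $\mathrm{deg}(I'_{\delta,\Psi_1,\Psi_2},V,0)$ is well defined. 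By excision it is independent of such $V$, and \cite[Theorem 1.1]{CD1} yields both the finite type of the critical groups and the stated formula.

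The only potential obstacle is making sure the local $(S)_+$ hypothesis matches exactly what \cite[Theorem 1.1]{CD1} requires. If that theorem asks for $(S)_+$ on a neighborhood of $\bar z$ (as in its application to the critical $p$-Laplacian case), the closed ball from Theorem \ref{IprimodeltaPsi1Psi2S+} supplies it directly. If it instead asks for the property on the closure of a neighborhood on which the degree is computed, then the restriction argument above covers it. No further analytic work is needed: all the difficulty, namely handling the critical growth and the coupling of $u$ and $v$, is contained in Theorem \ref{IprimodeltaPsi1Psi2S+}.
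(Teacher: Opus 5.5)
Your proposal is correct and takes the same route as the paper. The paper also obtains the result by applying Theorem~\ref{IprimodeltaPsi1Psi2S+} at $z_0=\bar z$ to get the local $(S)_+$ property, and then invoking \cite[Theorem 1.1]{CD1}; it only additionally retraces the internals of \cite{CD1} (the sets $\Sigma$, $A$, the finite-dimensional reduction to $Y$, and $C_m\approx H_m(\Sigma,A)$) for the reader's convenience, which you do not need when citing that theorem as a black box.
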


\noindent
However, for the reader's convenience, in Section \ref{finitezzagruppicritici} we retrace the main ideas from the proof of \cite[Theorem 1.1]{CD1} that yield the previous result.

Concerning the computation of the critical groups, we mention \cite{CCMV}  when $\Psi_1=\Psi_{p,\alpha}$ and $\Psi_2=\Psi_{q,\alpha}$ with  $\alpha=\beta \geq 0$ and $2 < p,q <N$. The computation of the critical groups is needed in \cite{BCV} to obtain Amann-Zender type results for quasilinear systems. It is still an open problem the evaluation of the critical groups when $p$ or $q$ are less than $2$, or $\alpha \neq \beta$. We also refer the reader to   \cite{AFTPAC,ASSCING,CD,CDV,CDS,CV,CV2} for critical groups estimates for quasilinear elliptic equations (see also \cite{PAO}).\\

\section{PRELIMINARIES}

\medskip
\noindent
Let us begin by analyzing assumption $(\Psi)$. Let $\Psi: \mathbb{R}^N \to \mathbb{R}$ be a function of class $C^1$ with $\Psi(0)=0$, $\nabla \Psi(0)=0$, and let us assume that there exist $r>1$, $ \gamma \geq 0$ and $ \frac{1}{r} < \nu \leq C$ such that $ \displaystyle \left( \Psi - \nu \Psi_{r,\gamma}    \right)$  and  $ \displaystyle \left( C \Psi_{r,\gamma} - \Psi   \right)$  are both convex. Then $\Psi$ is strictly convex and for any $\xi \in \mathbb{R}^N$ satisfies
\begin{equation}\label{disuguaglianza1convessità}
\displaystyle \nu \Psi_{r,\gamma}( \xi) \leq \Psi (\xi) \leq C \Psi_{r,\gamma}( \xi) 
\end{equation}
and
\begin{equation}\label{disuguaglianza2convessità}
\displaystyle \nu( \gamma^2 + \left| \xi \right|^2   )^{\frac{r-2}{2}} |\xi|^2 \leq  \nabla \Psi ( \xi ) \cdot \xi  \leq  C( \gamma^2 + \left| \xi \right|^2   )^{\frac{r-2}{2}} |\xi|^2.
\end{equation}
Moreover, the number $r$ for which $\Psi$ satisfies assumption $(\Psi)$ is unique.

The Euler functional associated with system \eqref{AnisotropicSystem} is $I_{ \delta, \Psi_1, \Psi_2}: X \to \mathbb{R},$ defined for any $z=(u,v) \in X$ as
\begin{align}\label{funzionale}
	I_{\delta,\Psi_1,\Psi_2}(z) :=  \into  \Psi_1(\nabla u) \, dx + \into \Psi_2(\nabla v ) \, dx 
	- \into H(\delta,x,u,v) \, dx. 
\end{align}
By assumption  $(H)$ and Nemytskii operator theory, the part of \eqref{funzionale} involving the nonlinearity $H$ is of class $C^1$ on $X$. Assumption $(\Psi)$ implies that the part of \eqref{funzionale} involving the anisotropic-type operators is also of class $C^1$ on $X$ (see \cite[Appendix B]{BT} for the proof). Therefore, the functional $I_{\delta, \Psi_1, \Psi_2 }$ is of class $C^1$ on $X$, and for any $z_0=(u_0,v_0) \in X$ and $z=(u,v) \in X$ we have
\begin{align*}
	\langle I'_{\delta,\Psi_1,\Psi_2}(z_0), z \rangle = & \displaystyle  \into \nabla \Psi_1 \left( \nabla u_0 \right) \cdot \nabla u \, dx +\into
	\nabla \Psi_2 \left( \nabla v_0  \right) \cdot \nabla v \,dx \\
	&-\displaystyle\into   \bigl(H_s(\delta,x,u_0,v_0) u +  H_t(\delta,x,u_0,v_0) v\bigr)\, dx. \nonumber
\end{align*}

We begin recognizing that the part involving  the anisotropic-type operators  is of class $(S)_+$. Let us consider the map $\mathcal{H}_{\Psi}\!:\!\sobr\to \wr$ defined for any $w \in \sobr$ by
$$ \dis \mathcal{H}_{\Psi}(w):=-\text{\rm div}  \; [ \nabla \Psi \left( \nabla w  \right)], $$
where the map $\mathcal{H}_{\Psi}(w): \sobr \to \mathbb{R}$ acts on any $\phi \in \sobr$ as
$$ \dis \langle \mathcal{H}_{\Psi}(w), \phi \rangle := \into \nabla \Psi \left( \nabla w \right) \cdot \nabla \phi \, dx.    $$

\begin{lemma}\label{s+}
The map $H_{\Psi}\!:\!\sobr\to \wr$ 
is of class $(S)_+$.
\end{lemma}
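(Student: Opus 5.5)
We take a sequence $w_k \rightharpoonup w$ weakly in $\sobr$ with
\[
\limsup_{k}\into \nabla\Psi(\nabla w_k)\cdot\nabla(w_k-w)\,dx\le 0,
\]
and aim to show $\|w_k-w\|_{1,r}\to 0$. The plan is to exploit the strong convexity hidden in assumption $(\Psi)$: since $\Psi-\nu\Psi_{r,\gamma}$ is convex and $C^1$, its gradient is monotone. Hence for all $\xi,\eta\in\re^N$
\begin{equation*}
\bigl(\nabla\Psi(\xi)-\nabla\Psi(\eta)\bigr)\cdot(\xi-\eta)\ \ge\ \nu\,\bigl(\nabla\Psi_{r,\gamma}(\xi)-\nabla\Psi_{r,\gamma}(\eta)\bigr)\cdot(\xi-\eta)\ \ge 0 .
\end{equation*}
This reduces the problem to the model operator $-\mathrm{div}(\nabla\Psi_{r,\gamma}(\nabla w))$, i.e.\ the $r$-area (or $r$-Laplacian when $\gamma=0$) operator.

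\textbf{Step 1.} Since $\nabla\Psi(\nabla w)\in L^{r'}(\Omega)^N$ by the growth bound (from the convexity of $C\Psi_{r,\gamma}-\Psi$, $|\nabla\Psi(\xi)|\le c(\gamma^{r-1}+|\xi|^{r-1})$), the weak convergence gives $\into\nabla\Psi(\nabla w)\cdot\nabla(w_k-w)\,dx\to0$. Subtracting, we obtain
\begin{equation*}
\limsup_k \into \bigl(\nabla\Psi_{r,\gamma}(\nabla w_k)-\nabla\Psi_{r,\gamma}(\nabla w)\bigr)\cdot\nabla(w_k-w)\,dx\le 0 ,
\end{equation*}
and by monotonicity this limit is actually $0$.

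\textbf{Step 2.} Deduce strong convergence for the model operator. One route is to quote that $-\mathrm{div}(\nabla\Psi_{r,\gamma}(\nabla\cdot))$ is of class $(S)_+$, as in \cite{dinca_jebelean_mawhin2001} for $\gamma=0$ and \cite{CDV} in general. A self-contained route works as follows. The integrand $g_k:=(\nabla\Psi_{r,\gamma}(\nabla w_k)-\nabla\Psi_{r,\gamma}(\nabla w))\cdot\nabla(w_k-w)\ge0$ tends to $0$ in $L^1$. Strict monotonicity of $\nabla\Psi_{r,\gamma}$, together with its coercivity $\nabla\Psi_{r,\gamma}(\xi)\cdot\xi\ge|\xi|^r$ for $r\ge2$ (and $\ge c|\xi|^r-c\gamma^r$ in general), then gives $\nabla w_k\to\nabla w$ a.e. along a subsequence. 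Next, the identity $\into\nabla\Psi_{r,\gamma}(\nabla w_k)\cdot\nabla w_k\to\into\nabla\Psi_{r,\gamma}(\nabla w)\cdot\nabla w$ follows from Step 1 plus weak convergence. Combined with Fatou's lemma applied to $\Psi_{r,\gamma}$ (convexity: $\Psi_{r,\gamma}(\nabla w)\ge\Psi_{r,\gamma}(\nabla w_k)+\nabla\Psi_{r,\gamma}(\nabla w_k)\cdot\nabla(w-w_k)$), this yields $\into\Psi_{r,\gamma}(\nabla w_k)\to\into\Psi_{r,\gamma}(\nabla w)$. Since $\Psi_{r,\gamma}(\xi)\ge c|\xi|^r-c'$ and is comparable to $|\xi|^r$ at infinity, a Brezis--Lieb/Vitali argument converts a.e. convergence plus convergence of these integrals into equi-integrability of $|\nabla w_k|^r$. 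This gives $\nabla w_k\to\nabla w$ in $L^r$, and the subsequence principle gives the whole sequence.

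\textbf{Main obstacle.} The delicate point is Step 2 in the degenerate/singular regimes: $1<r<2$, where no inequality of the form $|\xi-\eta|^r\lesssim(\ldots)\cdot(\xi-\eta)$ holds directly, and $\gamma>0$, where the operator is not homogeneous. I would avoid case-by-case algebraic inequalities and rely on the a.e.-convergence plus convergence-of-energies argument above, which is insensitive to $r$ and $\gamma$. The a.e.-convergence extraction from $g_k\to0$ in $L^1$ (pointwise strict monotonicity plus coercivity preventing $|\nabla w_k|\to\infty$ on positive-measure sets) is the step to write carefully.
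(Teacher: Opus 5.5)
Your proposal is correct and follows the paper's proof. The paper uses the same reduction: convexity of $\Psi-\nu\Psi_{r,\gamma}$ gives the monotonicity inequality, which transfers the $\limsup$ condition from $\mathcal{H}_{\Psi}$ to the model $r$-area operator, and then it invokes the $(S)_+$ property of that model operator via \cite[Lemma 2.1]{BCV2}, just as in your first option for Step 2. Your self-contained alternative is also a valid proof of that cited fact: a.e.\ convergence of gradients from strict monotonicity and coercivity, then convergence of $\int_\Omega \Psi_{r,\gamma}(\nabla w_k)\,dx$ via the convexity inequality and Fatou, then Vitali.
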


\begin{proof}
Let $\{w_k\}_k \subset \sobr$ and $w \in \sobr$ such that $w_k \rightharpoonup w$ in $\sobr$ and
\begin{equation}\label{ipotesi111}
 \displaystyle \limsup_{k \to + \infty} \, \langle  \mathcal{H}_{\Psi}(u_k), u_k - u  \rangle \leq 0.
\end{equation}
Explicitily, we have 
$$ \displaystyle \langle  \mathcal{H}_{\Psi} (w_k), w_k - w  \rangle = \into \nabla \Psi \left( \nabla w_k \right) \cdot \nabla \left( w_k - w \right) \, dx.$$
Since $w_k \rightharpoonup w$ in $\sobr$, we have
\begin{equation*}
\displaystyle \into  \nabla \Psi (\nabla w) \cdot \nabla \left(w_k - w \right) \, dx  \to 0
\end{equation*}
and
\begin{equation*}
\displaystyle \into  \nabla \Psi_{r,\gamma} (\nabla w) \cdot \nabla \left(w_k - w \right) \, dx  \to 0.
\end{equation*}
By convexity of the function $\displaystyle f:=\left( \Psi - \nu \Psi_{r,\gamma}    \right)$, for any $ \xi_1, \xi_2 \in \mathbb{R}^N$ we have
$$ \displaystyle \left( \nabla f(\xi_2)  - \nabla f(\xi_1) \right) \cdot \left( \xi_2 - \xi_1  \right) \geq 0,$$
whence
$$ \displaystyle \left( \nabla \Psi(\xi_2)  - \nabla \Psi(\xi_1) \right) \cdot \left( \xi_2 - \xi_1  \right) \geq \nu \left( \nabla \Psi_{r,\gamma}  (\xi_2)  - \nabla \Psi_{r,\gamma}  (\xi_1) \right) \cdot \left( \xi_2 - \xi_1  \right).$$
Therefore,
\begin{align*}
\displaystyle
     & \nu \into \left( \nabla \Psi_{r,\gamma}  (\nabla w_k)  - \nabla \Psi_{r,\gamma}  (\nabla w) \right) \cdot \nabla \left(   w_k - w  \right) \, dx \\
& \qquad \qquad \leq  \into \left( \nabla \Psi  ( \nabla w_k)  - \nabla \Psi  (\nabla w) \right) \cdot \nabla \left(  w_k - w  \right) \, dx \nonumber.
\end{align*}
Altogether, and considering also the properties of $\dis \limsup$, we infer that 
\begin{equation*}
\displaystyle \nu \, \limsup_{k \to + \infty} \, \langle  H_{\Psi_{r,\gamma}}(w_k), w_k - w    \rangle \leq \limsup_{k \to + \infty} \langle  H_{\Psi} (w_k), w_k - w    \rangle.
\end{equation*}
Since $\nu > \frac{1}{p} >0$, and taking into account \eqref{ipotesi111}, we have
$$ \displaystyle \limsup_{k \to + \infty} \, \langle  H_{\Psi_{r,\gamma}}(w_k), w_k - w    \rangle  \leq 0.$$
By applying \cite[Lemma 2.1]{BCV2}, we infer that $w_k \to w$ in $\sobr$.
\end{proof}

\noindent
Now, let $\Psi_{1}, \Psi_{2}: \mathbb{R}^N \to \mathbb{R}$ be two functions as in assumption $(\Psi)$.\\
Let us consider the map $\mathcal{H}_{\Psi_1,\Psi_2}: X \to X'$ defined for any $z=(u,v) \in X$ by
\begin{align}\label{HPsi1Psi2}
\displaystyle
	 \mathcal{H}_{\Psi_1,\Psi_2}(z) = \left[ H_{\Psi_1}(u) , H_{\Psi_2}(v) \right], 
\end{align}
where the map $\mathcal{H}_{\Psi_1,\Psi_2}(z): X \to \mathbb{R}$ acts on any $\bar{z}=(\bar{u},\bar{v}) \in X$ as
\begin{align*}
\displaystyle
 \langle H_{\Psi_1,\Psi_2}(z), \bar{z} \rangle 
& =  \into \nabla \Psi_1 \left( \nabla u \right) \cdot \nabla \bar{u} \, dx + \into \nabla \Psi_2 \left( \nabla v \right) \cdot \nabla \bar{v} \, dx.
\end{align*}

\begin{proposition}\label{HPsi1Psi2S+}
The map $\mathcal{H}_{\Psi_1,\Psi_2}: X \to X'$ is of class $(S)_+$.
\end{proposition}

\begin{proof}
Let $\{z_k\}_k=\{(u_k,v_k)\}_k \subset X$ and $z=(u,v) \in X$ such that $z_k \rightharpoonup z$ in $X$ and
\begin{equation}\label{ipotesi11}
\displaystyle \limsup_{k \to + \infty} \, \langle  \mathcal{H}_{\Psi_1,\Psi_2}(z_k), z_k - z  \rangle \leq 0.
\end{equation}
Explicitly, we have
\begin{align*}
\displaystyle
 \langle \mathcal{H}_{\Psi_1,\Psi_2}(z_k), z_k - z \rangle 
& =  \into \nabla \Psi_1 \left( \nabla u_k \right) \cdot \nabla \left( u_k - u \right) \, dx \\
& +  \into \nabla \Psi_2 \left( \nabla v_k \right) \cdot \nabla \left( v_k - v \right) \, dx. \nonumber
\end{align*}
Since $u_k \rightharpoonup u $ in $W_0^{1,p}(\Omega)$ and $v_k \rightharpoonup v $ in $W_0^{1,q}(\Omega)$, we deduce 
\begin{equation}\label{conto1}
\displaystyle \into  \nabla \Psi_1 (\nabla u) \cdot \nabla \left(u_k - u \right) \, dx  \to 0,
\end{equation}
\begin{equation}\label{conto2}
\displaystyle \into  \nabla \Psi_{p,\alpha} (\nabla u) \cdot \nabla \left(u_k - u \right) \, dx  \to 0,
\end{equation}
and 
\begin{equation}\label{conto3}
\displaystyle \into  \nabla \Psi_2 (\nabla v) \cdot \nabla \left(v_k - v \right) \, dx  \to 0,
\end{equation}
\begin{equation}\label{conto4}
\displaystyle \into  \nabla \Psi_{q,\beta} (\nabla v) \cdot \nabla \left(v_k - v \right) \, dx  \to 0.
\end{equation}
By assumption $(\Psi)$, for any  $\xi_1, \xi_2 \in \mathbb{R}^N$ we have 
$$ \displaystyle \left( \nabla \Psi_1(\xi_2)  - \nabla \Psi_1(\xi_1) \right) \cdot \left( \xi_2 - \xi_1  \right) \geq \nu_1 \left( \nabla \Psi_{p,\alpha}  (\xi_2)  - \nabla \Psi_{p,\alpha}  (\xi_1) \right) \cdot \left( \xi_2 - \xi_1  \right) \geq 0 $$
and 
$$ \displaystyle \left( \nabla \Psi_2(\xi_2)  - \nabla \Psi_2(\xi_1) \right) \cdot \left( \xi_2 - \xi_1  \right) \geq \nu_2 \left( \nabla \Psi_{q,\beta}  (\xi_2)  - \nabla \Psi_{q,\beta}  (\xi_1) \right) \cdot \left( \xi_2 - \xi_1  \right) \geq 0. $$
Therefore, by using also the (strict) convexity of $\Psi_{p,\alpha}$ and $\Psi_{q,\beta},$ we get
\begin{align}\label{conto5}
\displaystyle 
      & \quad \into \left( \nabla \Psi_1  ( \nabla u_k)  - \nabla \Psi_1  (\nabla u) \right) \cdot \nabla \left(  u_k - u  \right) \, dx  \nonumber \\
    & + \into \left( \nabla \Psi_2  ( \nabla v_k)  - \nabla \Psi_2  (\nabla v) \right) \cdot \nabla \left(  v_k - v  \right) \, dx \nonumber \\
 \geq & \quad \nu_1 \into \left( \nabla \Psi_{p,\alpha}  ( \nabla u_k)  - \nabla \Psi_{p,\alpha}  (\nabla u) \right) \cdot \nabla \left(  u_k - u  \right) \, dx \\
     & + \nu_2 \into \left( \nabla \Psi_{q,\beta} ( \nabla v_k)  - \nabla \Psi_{q,\beta} (\nabla v) \right) \cdot \nabla \left(  v_k - v  \right) \, dx  \nonumber \\
 \geq & \min \{ \nu_1, \nu_2 \} \left( \into \left( \nabla \Psi_{p,\alpha}  ( \nabla u_k)  - \nabla \Psi_{p,\alpha}  (\nabla u) \right) \cdot \nabla \left(  u_k - u  \right) \, dx \right. \nonumber \\
    & \qquad \qquad \quad + \left. \into \left( \nabla \Psi_{q,\beta} ( \nabla v_k)  - \nabla \Psi_{q,\beta} (\nabla v) \right) \cdot \nabla \left(  v_k - v  \right) \, dx \right). \nonumber
\end{align}
Altogether, and considering also the properties of $\dis \limsup$, we infer that 
\begin{equation*}
\displaystyle  \min \{ \nu_1, \nu_2 \} \, \limsup_{k \to + \infty} \langle  \mathcal{H}_{\Psi_{p,\alpha},\Psi_{q,\beta}}(z_k), z_k - z    \rangle \leq \limsup_{k \to + \infty} \langle  \mathcal{H}_{\Psi_1,\Psi_2} (z_k), z_k - z    \rangle.
\end{equation*}
Since $\min \{ \nu_1, \nu_2 \}  >0$, and taking into account \eqref{ipotesi11},  we proved
$$ \displaystyle \limsup_{k \to + \infty} \langle  \mathcal{H}_{\Psi_{p,\alpha},\Psi_{q,\beta}}(z_k), z_k - z    \rangle \leq 0. $$
By applying \cite[Proposition 2.2.]{BCV2}, we have the thesis.
\end{proof}

\begin{remark}\label{dausarenellaProof}
Considering in the previous proof the limits \eqref{conto1}, \eqref{conto2}, \eqref{conto3}, \eqref{conto4} and the inequality \eqref{conto5}, and taking into account a sequence $\{A_k\}_k$  of real numbers, we get 
\begin{align*}
\dis 
& \limsup_{k \to + \infty} \, \biggl( \nu_1 \,  \langle H_{\Psi_{p,\alpha}}(u_k),u_k-u  \rangle + \nu_2 \, \langle H_{\Psi_{q,\beta}}(v_k),v_k-v    \rangle    - A_k  \biggr) \\
& \qquad \qquad \qquad \leq \limsup_{k \to + \infty} \, \biggl(  \langle \mathcal{H}_{\Psi_1,\Psi_2}(z_k), z_k - z    \rangle    - A_k  \biggr).
\end{align*}
\end{remark}

\begin{remark}\label{TroncataRresto}
For every $h \in \mathbb{N},$ $h \geq 1$ we define $\mathcal{T}_h,\mathcal{R}_h:\mathbb{R} \longrightarrow \mathbb{R}$ as 
\begin{equation}\label{ThRh}
\displaystyle \mathcal{T}_h(s):= \min\{\max\{s,-h\},h\} \qquad \text{and}  \qquad \mathcal{R}_h(s):=s-\mathcal{T}_h(s).
\end{equation}
If $w \in W_0^{1,r}(\Omega)$ for some $r>1$,  then $\mathcal{T}_h(w), \, \mathcal{R}_h(w) \in W_0^{1,r}(\Omega)$. Moreover, we have
\begin{equation*}
\qquad \mathcal{T}_h(w(x))=
\begin{cases}
   h       & \text{ a.e. in } \{ x \in \Omega \; : \; w(x)\geq  h \}, \medskip \\
    w(x)             & \text{ a.e. in } \{ x \in \Omega \; : \; - h < w(x) < h \}, \medskip \\
 - h        & \text{ a.e. in } \{ x \in \Omega  \; : \; w(x) \leq - h \};
\end{cases}
\end{equation*}

\begin{equation*}
 \nabla \mathcal{T}_h(w(x))=
\begin{cases}
 \nabla w(x)        & \text{ a.e. in } \{ x \in \Omega \;  : \; |w(x)| <  h \}, \medskip\\
0                   & \text{ a.e. in } \{ x \in \Omega \;  : \; |w(x)| \geq h \};
\end{cases}
\end{equation*}

\begin{equation*}
\qquad \qquad \mathcal{R}_h(w(x))=
\begin{cases}
w(x) -  h       & \text{ a.e. in } \{ x \in \Omega \; : \; w(x)\geq  h \}, \medskip \\
0               & \text{ a.e. in } \{ x \in \Omega \; : \; - h < w(x) < h \}, \medskip \\
w(x) + h        & \text{ a.e. in } \{ x \in \Omega  \; : \; w(x) \leq - h \};
\end{cases}
\end{equation*}

\begin{equation*}
 \nabla \mathcal{R}_h(w(x))=
\begin{cases}
 \nabla w(x)        & \text{ a.e. in } \{ x \in \Omega \;  : \; |w(x)| >  h \}, \medskip\\
0                   & \text{ a.e. in } \{ x \in \Omega \;  : \; |w(x)| \leq h \};
\end{cases}
\end{equation*}

\medskip
\noindent
and 

\begin{equation*}
\qquad \qquad \nabla w(x)=
\begin{cases}
 \nabla \mathcal{T}_h(w(x))        & \text{ a.e. in } \{ x \in \Omega \; : \; |w(x)|< h \}, \medskip \\
 \nabla \mathcal{R}_h(w(x))                  & \text{ a.e. in } \{ x \in \Omega \; : \; |w(x)| > h \}, \medskip \\
 0       & \text{ a.e. in } \{ x \in \Omega  \; : \; |w(x)|=h \}.
\end{cases}
\end{equation*}
\end{remark}

\medskip
\noindent
We will use the following result, proved in \cite[Appendix]{BCV2} .

\begin{lemma}\label{convergenzasopralivelli}
 Let $\Omega$ be a bounded domain of $\mathbb{R}^N$.
Let $\{w_k\}_k \subset L^1(\Omega)$ and $w \in L^1(\Omega)$ such that $w_k(x) \to w(x)$ a.e. $x \in \Omega$ as $k \to \infty$. Let $h \in \mathbb{N}$, $h \geq 1$ and let us define, almost everywhere, the sets
\begin{equation*}
\displaystyle
\Omega^{w_k}_{h,+}:= \{ x \in \Omega \; : \;  |w_k(x)| \;  \geq \;  h   \} \quad \text{and} \quad \Omega^{w}_{h,+}:= \{ x \in \Omega \; : \;  |w(x)| \;  \geq \;  h   \}.
\end{equation*} 
Then 
$$ \displaystyle \limsup_{k \to + \infty} \left| \Omega^{w_k}_{h,+}  \right| \leq \left| \Omega^{w}_{h,+}  \right|.$$ 
\end{lemma}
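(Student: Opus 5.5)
The plan is to pass to indicator functions and apply the reverse Fatou lemma. Since $\Omega$ is bounded, $|\Omega|<+\infty$, and the characteristic functions $\chi_k:=\chi_{\Omega^{w_k}_{h,+}}$ are bounded above by the integrable function $\chi_\Omega$. The $L^1$ membership of $w_k$ and $w$ is needed only so that these sets are defined up to null sets; it plays no other role.

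First, fix a full-measure set $E\subseteq\Omega$ on which $w_k(x)\to w(x)$, and check pointwise that for every $x\in E$
\[
\limsup_{k\to+\infty}\chi_{\Omega^{w_k}_{h,+}}(x)\le \chi_{\Omega^{w}_{h,+}}(x).
\]
If $|w(x)|\ge h$, the right-hand side equals $1$ and there is nothing to prove. If $|w(x)|<h$, continuity of $|\cdot|$ gives $|w_k(x)|\to|w(x)|<h$. Hence $|w_k(x)|<h$ for all $k$ large, so $\chi_k(x)=0$ eventually and the limsup is $0$.

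Second, since $0\le\chi_k\le\chi_\Omega\in L^1(\Omega)$, the reverse Fatou lemma (Fatou applied to $\chi_\Omega-\chi_k\ge0$) gives
\[
\limsup_{k\to+\infty}\left|\Omega^{w_k}_{h,+}\right|=\limsup_{k\to+\infty}\into\chi_k\,dx\le\into\limsup_{k\to+\infty}\chi_k\,dx\le\into\chi_{\Omega^{w}_{h,+}}\,dx=\left|\Omega^{w}_{h,+}\right|.
\]
This is the thesis.

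No step is a real obstacle. The only point needing care is that the inequality runs in the limsup direction: we use the closedness of the superlevel condition $\{|s|\ge h\}$, so that strict inequality $|w|<h$ is stable under pointwise convergence. We also use the finiteness of $|\Omega|$ to justify reverse Fatou. The reverse inequality for $\liminf$ would fail on the set $\{|w|=h\}$, which is why the statement is only an upper bound.
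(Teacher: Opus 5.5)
Your proof is correct and complete. The pointwise bound $\limsup_{k}\chi_{\Omega^{w_k}_{h,+}}\le\chi_{\Omega^{w}_{h,+}}$ holds on the full-measure set where $w_k\to w$, because $\{|s|<h\}$ is open. The reverse Fatou lemma then applies, since $|\Omega|<\infty$ makes $\chi_\Omega$ an integrable majorant.

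The paper gives no proof of this lemma to compare with: it states the result and refers to the appendix of \cite{BCV2}. Your indicator-function argument is therefore a self-contained substitute for that citation. An equally short alternative uses the inclusion $\Omega^{w_k}_{h,+}\subseteq\{|w|>h-\varepsilon\}\cup\{|w_k-w|\ge\varepsilon\}$. Combined with the fact that a.e.\ convergence implies convergence in measure on a set of finite measure, this gives $\limsup_k|\Omega^{w_k}_{h,+}|\le|\{|w|>h-\varepsilon\}|$, and continuity from above of the measure as $\varepsilon\to0$ finishes the proof. Both routes use only measurability, a.e.\ finiteness of $w$, and $|\Omega|<\infty$, as you observed. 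Your remark that the $\liminf$ inequality can fail because of the set $\{|w|=h\}$ is also right.
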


\medskip
\noindent
We will frequently use also the following result (see \cite{BREZIS}).

\begin{proposition}\label{weakconvLp}
Let $1< r < \infty$ and $(\Omega,\mathcal{M}, \mu)$ a measure space with $\sigma$-finite measure $\mu$.
Let $\{f_k\}_k \subset L^r(\Omega)$ be a sequence such that:
\begin{itemize}
\item[i)] $\{f_k\}_k$ is bounded in $L^r(\Omega)$;
\item[ii)] $f_k(x) \to f(x)$ a.e. $x \in \Omega$. 
\end{itemize}
Then 
$$f_k \rightharpoonup f \text{ in } L^r(\Omega).$$
\end{proposition}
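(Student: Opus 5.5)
The plan is to prove the statement directly from the definition of weak convergence. We fix a test function $\varphi \in L^{r'}(\mu)$, with $r'=r/(r-1)\in(1,\infty)$, and show that $\int_\Omega (f_k-f)\varphi\,d\mu \to 0$. First we check that the candidate limit lies in the right space. Let $M:=\sup_k \|f_k\|_r<\infty$, which is finite by i). Since $|f_k|^r \to |f|^r$ a.e. by ii), Fatou's lemma gives $\int_\Omega |f|^r\,d\mu \le \liminf_k \int_\Omega |f_k|^r\,d\mu \le M^r$. Hence $f\in L^r(\Omega)$ and $\|f_k-f\|_r\le 2M$ for every $k$, which is the only uniform bound we will need.

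Next we localize using $\sigma$-finiteness and absolute continuity. Fix $\varepsilon>0$ and write $\Omega=\bigcup_j \Omega_j$ with $\Omega_j$ increasing and $\mu(\Omega_j)<\infty$. By monotone convergence, $\int_{\Omega\setminus\Omega_j}|\varphi|^{r'}d\mu\to 0$, so we choose $A:=\Omega_j$ with $\mu(A)<\infty$ and $\|\varphi\chi_{\Omega\setminus A}\|_{r'}<\varepsilon$. The measure $E\mapsto \int_E|\varphi|^{r'}d\mu$ is absolutely continuous with respect to $\mu$, so there is $\eta>0$ with $\|\varphi\chi_{B}\|_{r'}<\varepsilon$ whenever $B \subseteq A$ and $\mu(B)<\eta$. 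Since $\mu(A)<\infty$ and ii) holds, Egorov's theorem provides $E\subseteq A$ with $\mu(A\setminus E)<\eta$ such that $f_k\to f$ uniformly on $E$.

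Now we split the integral over $\Omega\setminus A$, $A\setminus E$ and $E$. On the first two pieces, Hölder's inequality and the bound $\|f_k-f\|_r\le 2M$ give a contribution of at most $2M\varepsilon$ each, uniformly in $k$. On $E$ we have $\varphi\in L^{r'}(E)\subseteq L^1(E)$, because $\mu(E)<\infty$. Therefore
\[
\Bigl|\int_E (f_k-f)\varphi\,d\mu\Bigr|\le \sup_E|f_k-f|\,\|\varphi\|_{L^1(E)}\to 0 .
\]
It follows that $\limsup_k|\int_\Omega(f_k-f)\varphi\,d\mu|\le 4M\varepsilon$. Since $\varepsilon$ is arbitrary, the limit is zero. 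Because $1<r<\infty$, the dual of $L^r$ is $L^{r'}$, so this is exactly weak convergence $f_k\rightharpoonup f$ in $L^r(\Omega)$.

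The only delicate point is the reduction to a set of finite measure. Egorov's theorem requires finite measure, and the uniform $L^r$ bound is precisely what controls the leftover pieces; this is where both hypotheses i) and $\sigma$-finiteness are used. An alternative route would be to use reflexivity to extract a weakly convergent subsequence and then identify its limit with $f$ via the same Egorov argument. The direct argument above avoids the subsequence step altogether.
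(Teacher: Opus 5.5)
Your proof is correct and complete. Fatou gives $f\in L^r$ with the uniform bound $\|f_k-f\|_r\le 2M$. The tail outside a finite-measure set and the small exceptional set from Egorov are controlled by H\"older's inequality. Uniform convergence on the remaining set of finite measure handles the rest.

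The paper gives no proof of this proposition. It only states it as a known fact and refers to the book of Brezis, so there is no competing argument to compare with. What you wrote is the standard Egorov-based proof, and it makes the cited fact self-contained.

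Two small comments:
\begin{itemize}
\item Your closing remark slightly misplaces where the hypotheses matter. The assumption $r>1$ enters exactly through $r'<\infty$: it is what makes $\int_B|\varphi|^{r'}\,d\mu$ small on tails and on sets of small measure. The statement indeed fails for $r=1$.
\item $\sigma$-finiteness is actually dispensable. For $\varphi\in L^{r'}$ with $r'<\infty$, the sets $A_n=\{|\varphi|>1/n\}$ have finite measure by Chebyshev, and $\int_{\{|\varphi|\le 1/n\}}|\varphi|^{r'}\,d\mu\to 0$ by dominated convergence. So these sets can play the role of your set $A$. The identification $(L^r)'=L^{r'}$ for $1<r<\infty$ also holds on arbitrary measure spaces.
\end{itemize}

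The reflexivity route you mention would also work: extract a weakly convergent subsequence, identify its limit with $f$, and conclude by uniqueness of the limit. Your direct argument avoids this subsequence step.
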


\bigskip
\section{PROOF OF THEOREM ~\ref{IprimodeltaPsi1Psi2S+}.~}

\medskip
\noindent
Let us consider $z_0=(u_0,v_0) \in X$, a radius $R>0$, and let us consider a sequence $\{z_k\}_k=\{(u_k,v_k)\}_k \subset \overline{B_R(z_0)}$  weakly convergent to some $z=(u,v)$ in $X$ with
\begin{equation*}
\displaystyle \limsup_{k\to + \infty} \, \langle I'_{\delta,\Psi_1,\Psi_2}(z_k), z_k-z\rangle\leq 0.
\end{equation*} 
Via the map $\mathcal{H}_{\Psi_1,\Psi_2}$ defined in \eqref{HPsi1Psi2}, the preceding assumption can be rewritten as follows:
\begin{align}\label{ipotesiHapbq}
\displaystyle 
0 \geq \limsup_{k \to + \infty}
& \Biggl\{  \langle \mathcal{H}_{\Psi_1,\Psi_2}(z_k),z_k-z    \rangle  \\ \nonumber
&   - \into \left[ H_s(\delta,x,u_k,v_k) (u_k - u) + H_t(\delta,x,u_k,v_k)(v_k - v) \right]
dx  \Biggr\} .
\end{align}
By Remark \ref{dausarenellaProof}, it follows that
\begin{align*}
\displaystyle 
0 \geq \limsup_{k \to + \infty}
&  \Biggl\{ \nu_1 \,  \langle H_{\Psi_{p,\alpha}}(u_k),u_k-u  \rangle + \nu_2 \, \langle H_{\Psi_{q,\beta}}(v_k),v_k-v    \rangle \\ \nonumber
&  - \into \left[ H_s(\delta,x,u_k,v_k) (u_k - u) + H_t(\delta,x,u_k,v_k)(v_k - v) \right]
dx   \Biggr\}. 
\end{align*}

Taking into account $c \geq 0, r >1$ and the convexity of the functional $f:W_0^{1,r}(\Omega) \to \mathbb{R}$ defined as
\begin{equation}\label{funzionalegenerico}
\displaystyle f(w):= \frac{1}{r} \into \left( c^2 + | \nabla w |^2 \right)^{\frac{r}{2}} \, dx.
\end{equation} 
we get
\begin{align}\label{contoPSlocale1}
\dis
0 & \geq \limsup_{k \to + \infty} \Biggl\{  \frac{\nu_1}{p} \into \left( \alpha^2 + | \nabla u_k|^2 \right)^{\frac{p}{2}} \, dx  - \frac{\nu_1}{p} \into \left( \alpha^2 + | \nabla u |^2 \right)^{\frac{p}{2}} \, dx  \\
&   \qquad \qquad + \frac{\nu_2}{q} \into \left( \beta^2 + | \nabla v_k|^2 \right)^{\frac{q}{2}} \, dx  - \frac{\nu_2}{q} \into \left( \beta^2 + | \nabla v |^2 \right)^{\frac{q}{2}} \, dx \nonumber \\ 
&   \qquad \qquad- \into \left[ H_s(\delta,x,u_k,v_k) (u_k - u) + H_t(\delta,x,u_k,v_k)(v_k - v) \right]
 \, dx \Biggr\} . \nonumber 
\end{align}
We know that $u_k \rightharpoonup u$ in $W_0^{1,p}(\Omega)$ and $v_k \rightharpoonup v$ in $W_0^{1,q}(\Omega)$.  Moreover, we will use the fact that  $\{u_k\}_k$ is bounded in $L^{p^*}(\Omega)$ and
 $\{v_k\}_k$ is bounded in $L^{q^*}(\Omega)$. For every $h \in \mathbb{N}$, $h \geq 1$, let us consider the functions $\mathcal{T}_h,\mathcal{R}_h:\mathbb{R} \to \mathbb{R}$ defined in \eqref{ThRh}. We have:\\
\begin{itemize}[leftmargin=*]
\item[$i)$] $\mathcal{T}_h(u_k) \rightharpoonup \mathcal{T}_h(u) \text{ in } W_0^{1,p}(\Omega)$, and $\mathcal{T}_h(v_k) \rightharpoonup \mathcal{T}_h(v) \text{ in } W_0^{1,q}(\Omega);$ 
\item[$ii)$] $\mathcal{T}_h(u_k) \rightarrow \mathcal{T}_h(u) \text{ in }  L^{\sigma}(\Omega)$ if $ 1 \leq \sigma < \infty$, and $\mathcal{T}_h(v_k) \rightarrow \mathcal{T}_h(v) \text{ in }  L^{\tau}(\Omega)$ if $ 1 \leq \tau < \infty$. 
\item[$iii)$] $\mathcal{R}_h(u_k) \rightharpoonup \mathcal{R}_h(u) \text{ in } W_0^{1,p}(\Omega)$, and $\mathcal{R}_h(v_k) \rightharpoonup \mathcal{R}_h(v) \text{ in } W_0^{1,q}(\Omega);$
\item[$iv)$] $\mathcal{R}_h(u_k) \rightarrow \mathcal{R}_h(u) \text{ in }  L^{\sigma}(\Omega)$ if $ 1 \leq \sigma < p^*$, and $\mathcal{R}_h(v_k) \rightarrow \mathcal{R}_h(v) \text{ in }  L^{\tau}(\Omega)$ if $ 1 \leq \tau < q^*$.
\end{itemize}
Taking into account Remark \ref{TroncataRresto}, we have
\begin{align*}
\displaystyle
 &\into \left( \alpha^2 + | \nabla u|^2 \right)^{\frac{p}{2}} \, dx \\
=&  \int_{ |u| > h  }  \left( \alpha^2 + | \nabla R_h (u) |^2 \right)^{\frac{p}{2}} \, dx 
 + \int_{ |u| < h  }  \left( \alpha^2 + | \nabla T_h (u) |^2 \right)^{\frac{p}{2}} \, dx 
 + \alpha^{p}  \int_{ |u| = h  } \, dx \\
= &  \quad  \into \left( \alpha^2 + | \nabla R_h(u)|^2 \right)^{\frac{p}{2}} \, dx - \alpha^{\frac{p}{2}}  \int_{ |u| \leq h  } \, dx \\
& + \into  \left( \alpha^2 + | \nabla T_h (u) |^2 \right)^{\frac{p}{2}} \, dx - \alpha^{p}  \int_{ |u| \geq h  } \, dx + \alpha^{p}  \int_{ |u| = h  } \, dx \\
=&  \into \left( \alpha^2 + | \nabla R_h(u)|^2 \right)^{\frac{p}{2}} \, dx  + \into \left( \alpha^2 + | \nabla T_h(u)|^2 \right)^{\frac{p}{2}} \, dx  - \alpha^{p} \left| \Omega   \right|, 
\end{align*}
Similarly, we have
\begin{align*}
\displaystyle
& \into \left( \alpha^2 + | \nabla u_k|^2 \right)^{\frac{p}{2}} \, dx \\
= &  \into \left( \alpha^2 + | \nabla R_h(u_k)|^2 \right)^{\frac{p}{2}} \, dx 
+ \into \left( \alpha^2 + | \nabla T_h(u_k)|^2 \right)^{\frac{p}{2}} \, dx  - \alpha^{p} \left| \Omega   \right|,\\
\end{align*}
\begin{align*}
\displaystyle
 & \into \left( \beta^2+ | \nabla v_k|^2 \right)^{\frac{q}{2}} \, dx \\
= & \into \left( \beta^2 + | \nabla R_h(v_k)|^2 \right)^{\frac{q}{2}} \, dx 
+ \into \left( \beta^2 + | \nabla T_h(v_k)|^2 \right)^{\frac{q}{2}} \, dx  - \beta^{q} \left| \Omega   \right|,\\
\end{align*}
\begin{align*}
\displaystyle
 &\into \left( \beta^2 + | \nabla v|^2 \right)^{\frac{q}{2}} \, dx \\
= & \into \left( \beta^2 + | \nabla R_h(v)|^2 \right)^{\frac{q}{2}} \, dx 
+ \into \left( \beta^2 + | \nabla T_h(v)|^2 \right)^{\frac{q}{2}} \, dx  - \beta^{q} \left| \Omega   \right|.
\end{align*}
Considering these relations in \eqref{contoPSlocale1}, we get 
\begin{align}\label{contoPSlocale2}
\displaystyle 
0  & \geq  \limsup_{k \to + \infty}  \;  \left\lbrace  \frac{\nu_1}{p} \into \left( \alpha^2 + | \nabla \mathcal{R}_h(u_k)|^2 \right)^{\frac{p}{2}} \, dx - \frac{\nu_1}{p} \into \left( \alpha^2 + | \nabla \mathcal{R}_h(u)|^2 \right)^{\frac{p}{2}} \, dx \right.  \\
& \quad \qquad \quad + \frac{\nu_2}{q} \into \left( \beta^2 + | \nabla \mathcal{R}_h(v_k)|^2 \right)^{\frac{q}{2}} \, dx - \frac{\nu_2}{q} \into \left( \beta^2 + | \nabla \mathcal{R}_h(v)|^2 \right)^{\frac{q}{2}} \, dx \nonumber \\
& \quad \qquad \quad - \into  H_s(\delta,x,u_k,v_k) (\mathcal{R}_h(u_k) - \mathcal{R}_h(u))  \, dx \nonumber \\
& \quad \qquad \quad  - \into  H_s(\delta,x,u_k,v_k)(\mathcal{T}_h(u_k) - \mathcal{T}_h(u)) \, dx \nonumber \\
& \quad \qquad \quad - \into  H_t(\delta,x,u_k,v_k) (\mathcal{R}_h(v_k) - \mathcal{R}_h(v))  \, dx \nonumber \\
& \quad \qquad \quad  - \into  H_t(\delta,x,u_k,v_k)(\mathcal{T}_h(v_k) - \mathcal{T}_h(v)) \, dx \nonumber \\
& \quad \qquad \quad + \frac{\nu_1}{p} \into \left( \alpha^2 + | \nabla \mathcal{T}_h(u_k)|^2 \right)^{\frac{p}{2}} \, dx - \frac{\nu_1}{p} \into \left( \alpha^2 + | \nabla \mathcal{T}_h(u)|^2 \right)^{\frac{p}{2}} \, dx \nonumber \\
& \left. \qquad \quad \quad + \frac{\nu_2}{q} \into \left( \beta^2 + | \nabla \mathcal{T}_h(v_k)|^2 \right)^{\frac{q}{2}} \, dx - \frac{\nu_2}{q} \into \left( \beta^2 + | \nabla \mathcal{T}_h(v)|^2 \right)^{\frac{q}{2}} \, dx \right\rbrace.  \nonumber
\end{align}
Considering $ii)$, assumption $(H)$ and the boundedness of $\{u_k\}_k $ in $ L^{p^*}(\Omega)$ and of $\{v_k\}_k$ in $L^{q^*}(\Omega)$, we get
\begin{align*}
\dis \into  H_s(\delta,x,u_k,v_k) (\mathcal{T}_h(u_k) - \mathcal{T}_h(u)) \, dx \to 0
\end{align*}
and
\begin{align*}
\dis \into  H_t(\delta,x,u_k,v_k)  (\mathcal{T}_h(v_k) - \mathcal{T}_h(v)) \, dx \to 0. 
\end{align*}
Moreover, by $i)$ and the weak lower semicontinuity of the functional $f$ defined in \eqref{funzionalegenerico}, we get
\begin{align*}
\frac{\nu_1}{p} \into \left( \alpha^2 + | \nabla \mathcal{T}_h(u) |^2 \right)^{\frac{p}{2}} \, dx  \leq \liminf_{k \to + \infty} \left[ \frac{\nu_1}{p} \into \left( \alpha^2 + | \nabla \mathcal{T}_h(u_k) |^2 \right)^{\frac{p}{2}} \, dx \right]
\end{align*}
and
\begin{align*}
 \frac{\nu_2}{q} \into \left( \beta^2 + | \nabla \mathcal{T}_h(v) |^2 \right)^{\frac{q}{2}} \, dx  \leq \liminf_{k \to + \infty} \left[ \frac{\nu_2}{q} \into \left( \beta^2 + | \nabla \mathcal{T}_h(v_k) |^2 \right)^{\frac{q}{2}} \, dx \right] . 
\end{align*}
Considering also the properties of $\limsup$ and $\liminf$, 
by \eqref{contoPSlocale2}  we deduce 
\begin{align*}
\displaystyle 
0  & \geq  \limsup_{k \to + \infty}  \left\lbrace \quad  \frac{\nu_1}{p} \into \left( \alpha^2 + | \nabla \mathcal{R}_h(u_k)|^2 \right)^{\frac{p}{2}} \, dx - \frac{\nu_1}{p} \into \left( \alpha^2 + | \nabla \mathcal{R}_h(u)|^2 \right)^{\frac{p}{2}} \, dx \right. \\
& \quad \qquad \qquad + \frac{\nu_2}{q} \into \left( \beta^2 + | \nabla \mathcal{R}_h(v_k)|^2 \right)^{\frac{q}{2}} \, dx - \frac{\nu_2}{q} \into \left( \beta^2 + | \nabla \mathcal{R}_h(v)|^2 \right)^{\frac{q}{2}} \, dx \\
&       \quad \qquad \qquad - \into  H_s(\delta,x,u_k,v_k)  (\mathcal{R}_h(u_k) - \mathcal{R}_h(u)) \, dx \\
& \left. \quad \qquad \qquad - \into  H_t(\delta,x,u_k,v_k) (\mathcal{R}_h(v_k) - \mathcal{R}_h(v))  \, dx \quad \right\rbrace . 
\end{align*}
Let us now define, almost everywhere, the following sets:
\begin{align*}
\displaystyle
\Omega^{u_k}_{h,+}:= \{ x \in \Omega \; : \;  |u_k(x)| \;  \geq  \;  h   \} \quad \text{and} \quad \Omega^{u_k}_{h,-}:= \{ x \in \Omega \; : \;  |u_k(x)| \;   <     \;  h   \}. 
\end{align*} 
Taking into account  Remark \ref{TroncataRresto}, we observe that 
\begin{align*}
\displaystyle
   \into \left( \alpha^2 + | \nabla \mathcal{R}_h(u_k)|^2 \right)^{\frac{p}{2}} \, dx 
& =    \alpha^{p} \left| \Omega^{u_k}_{h,-}  \right|   +  \int_{ \Omega^{u_k}_{h,+}} \left( \alpha^2 + | \nabla \mathcal{R}_h(u_k)|^2 \right)^{\frac{p}{2}} \, dx \\
& \geq  \alpha^{p} \left| \Omega^{u_k}_{h,-}  \right|   +  \int_{ \Omega^{u_k}_{h,+}} | \nabla \mathcal{R}_h(u_k)|^p \, dx \\
& = \alpha^{p} \left| \Omega \right| +  \into | \nabla \mathcal{R}_h(u_k)|^p \, dx - \alpha^{p} \left| \Omega^{u_k}_{h,+}  \right|. 
\end{align*}
By Lemma \ref{convergenzasopralivelli}, we have  
$$ \displaystyle \limsup_{k \to + \infty} \left| \Omega^{u_k}_{h,+}  \right| \leq \left| \Omega^{u}_{h,+}  \right|.$$
Similarly, we have 
\begin{align*}
\displaystyle
   \into \left( \beta^2 + | \nabla \mathcal{R}_h(v_k)|^2 \right)^{\frac{q}{2}} \, dx  \geq \beta^{q} \left| \Omega \right| +  \into | \nabla \mathcal{R}_h(v_k)|^q \, dx - \beta^{q} \left| \Omega^{v_k}_{h,+}  \right|
\end{align*}
with
$$ \displaystyle \limsup_{k \to + \infty} \left| \Omega^{v_k}_{h,+}  \right| \leq \left| \Omega^{v}_{h,+}  \right|.$$
Hence, considering once again the properties of $\limsup$ and $\liminf$, 
so far we have proved
\begin{align*}
\displaystyle
\limsup_{k \to + \infty}  & \Biggl\{ \frac{\nu_1}{p} \into | \nabla \mathcal{R}_h(u_k)|^p \, dx  +  \frac{\nu_2}{q} \into | \nabla \mathcal{R}_h(v_k)|^q \, dx  \\
& \quad - \into H_s(\delta,x,u_k,v_k) \left( \mathcal{R}_h(u_k)-\mathcal{R}_h(u) \right) \, dx \\
& \quad - \into H_t(\delta,x,u_k,v_k) \left( \mathcal{R}_h(v_k)-\mathcal{R}_h(v) \right)  \, dx  \Biggr\} \\
& \leq \quad \frac{\nu_1}{p} \into \left( \alpha^2+ | \nabla \mathcal{R}_h(u)|^2 \right)^{\frac{p}{2}} \, dx - \frac{\nu_1}{p} \alpha^{p} \left| \Omega \right| + \frac{\nu_1}{p} \alpha^{p} \left| \Omega^{u}_{h,+}  \right| \\
& \quad + \frac{\nu_2}{q} \into \left( \beta^2 + | \nabla \mathcal{R}_h(v)|^2 \right)^{\frac{q}{2}} \, dx -  \frac{\nu_2}{q} \beta^{q} \left| \Omega \right| + \frac{\nu_2}{q} \beta^{q} \left| \Omega^{v}_{h,+}  \right| . 
\end{align*}
Now, on the one hand by Chebyshev inequality as $h \to  \infty$  we have 
$$ \displaystyle \left| \Omega^{u}_{h,+}  \right| \to 0 \quad \text{and} \quad \left| \Omega^{v}_{h,+}  \right| \to 0, $$
on the other hand by dominated convergence theorem as $h \to  \infty$ we have
$$ \displaystyle \into \left( \alpha^2+ | \nabla \mathcal{R}_h(u)|^2 \right)^{\frac{p}{2}} \, dx \to \alpha^{p} \left| \Omega \right|  $$
and
$$ \displaystyle  \into \left( \beta^2+ | \nabla \mathcal{R}_h(v)|^2 \right)^{\frac{q}{2}} \, dx \to \beta^{q} \left| \Omega \right|. $$
Therefore, denoting by $o_h(1)$ a quantity that goes to $0$ as $h \to \infty$, we obtained
\begin{align*}
\displaystyle
\limsup_{k \to + \infty} &  \Biggl\{ \frac{\nu_1}{p} \into | \nabla \mathcal{R}_h(u_k)|^p \, dx  +  \frac{\nu_2}{q} \into | \nabla \mathcal{R}_h(v_k)|^q \, dx   \\
&   - \into H_s(\delta,x,u_k,v_k) \left( \mathcal{R}_h(u_k)-\mathcal{R}_h(u) \right) \, dx \\
&   - \into H_t(\delta,x,u_k,v_k) \left( \mathcal{R}_h(u_k)-\mathcal{R}_h(u) \right) \, dx \Biggr\} \\
& \leq o_h(1).
\end{align*}
By continuity of $H_s(\delta,x,\cdot,\cdot)$ and the convergence a.e. $x \in \Omega$ of $u_k(x)$ to $u(x)$ and of $v_k(x)$ to $v(x)$ as $k \to \infty$, we deduce
$$H_s( \delta,x,u_k(x),v_k(x)) \to H_s(\delta,x,u(x),v(x)) \qquad \text{a.e. } x \in \Omega \qquad \text{as } k \to \infty.$$
By $(H)$ and the boundedness of $\{u_k\}_k$ in $L^{p^*}(\Omega)$ and of $\{v_k\}_k$ in $L^{q^*}(\Omega)$, we deduce that the sequence $\{H_s(\delta,\cdot,u_k(\cdot),v_k(\cdot))\}_k$ is bounded in $L^{(p^*)'}(\Omega)$. Hence, we can apply Proposition \ref{weakconvLp} and say that 
$$H_s(\delta, \cdot, u_k(\cdot),v_k(\cdot)) \rightharpoonup H_s(\delta, \cdot, u(\cdot),v(\cdot)) \text{ in } L^{(p^*)'}(\Omega).$$ 
Similarly, also 
$$H_t(\delta, \cdot, u_k(\cdot),v_k(\cdot)) \rightharpoonup H_t(\delta, \cdot, u(\cdot),v(\cdot)) \text{ in } L^{(q^*)'}(\Omega).$$
Since $\mathcal{R}_h(u) \in L^{p^*}(\Omega)$ and $\mathcal{R}_h(v) \in L^{q^*}(\Omega)$,  we infer that
$$ \dis  \into H_s(\delta,x,u_k,v_k)\mathcal{R}_h(u) \, dx \to \into H_s(\delta,x,u,v)\mathcal{R}_h(u) \, dx$$
and
$$ \dis \into H_t(\delta,x,u_k,v_k)\mathcal{R}_h(v) \, dx \to \into H_t(\delta,x,u,v)\mathcal{R}_h(v) \, dx,$$
whence
\begin{align*}
\displaystyle
\limsup_{k \to + \infty} &  \Biggl\{ \quad \frac{\nu_1}{p} \into | \nabla \mathcal{R}_h(u_k)|^p \, dx  +  \frac{\nu_2}{q} \into | \nabla \mathcal{R}_h(v_k)|^q \, dx   \\
&   \quad - \into H_s(\delta,x,u_k,v_k) \mathcal{R}_h(u_k) \, dx \\
&   \quad - \into H_t(\delta,x,u_k,v_k) \mathcal{R}_h(u_k) \, dx \Biggr\} \\
& \leq - \into H_s(\delta,x,u,v) \mathcal{R}_h(u) \, dx \\
& \quad - \into H_t(\delta,x,u,v) \mathcal{R}_h(v) \, dx + o_h(1).
\end{align*}
By  $iv)$ we deduce $ \into \left| \mathcal{R}_h(u_k)  \right| \, dx \to \into \left|  \mathcal{R}_h(u)  \right| \, dx $  and $ \into \left| \mathcal{R}_h(v_k)  \right| \, dx \to \into \left|  \mathcal{R}_h(v)  \right| \, dx $ as $ k \to \infty.$ Together with assumption $(H)$, we get 
\begin{align*}
\displaystyle
\limsup_{k \to + \infty} 
& \Biggl\{ \quad \frac{\nu_1}{p} \into \left| \nabla \mathcal{R}_h(u_k)  \right|^p \, dx +  \frac{\nu_2}{q} \into \left| \nabla \mathcal{R}_h(v_k)  \right|^q \, dx   \\
& \quad - C \into |u_k|^{p^*-1} \left| \mathcal{R}_h(u_k) \right| \, dx - C \into |v_k|^{q^{*}\frac{p^*-1}{p^{*}}} \left| \mathcal{R}_h(u_k) \right| \, dx  \\
&  \quad - C \into |u_k|^{p^*\frac{q^{*}-1}{q^{*}}} \left| \mathcal{R}_h(v_k) \right| \, dx - C \into |v_k|^{q^*-1} \left| \mathcal{R}_h(v_k) \right| \, dx  \Biggr\} \\
& \leq  \quad C \into |u|^{p^*-1} \left| \mathcal{R}_h(u) \right| \, dx + C \into |v|^{q^{*}\frac{p^*-1}{p^{*}}} \left| \mathcal{R}_h(u) \right| \, dx  \\
& \quad + C \into |u|^{p^*\frac{q^{*}-1}{q^{*}}} \left| \mathcal{R}_h(v) \right| \, dx + C \into |v|^{q^*-1} \left| \mathcal{R}_h(v) \right| \, dx \\
& \quad + 2C \into \left| \mathcal{R}_h(u) \right| \, dx  + 2C \into \left| \mathcal{R}_h(v) \right| \, dx + o_h(1).
\end{align*}
By Gagliardo–Nirenberg–Sobolev inequality there exist $S(p,N)>0$ and $S(q,N)>0$ such that
$$ \displaystyle S(p,N) \lVert \bar{u} \rVert_{p^*}^{p} \leq \into | \nabla \bar{u}|^p \, dx \qquad \forall \, \bar{u} \in W_0^{1,p}(\Omega) $$
and
$$   S(q,N) \lVert \bar{v} \rVert_{q^*}^{q} \leq \into | \nabla \bar{v}|^q \, dx \qquad \forall \, \bar{v} \in W_0^{1,q}(\Omega).$$
Therefore,
\begin{align}\label{calcoloPalaisSmale3}
\displaystyle
\limsup_{k \to + \infty} 
& \Biggl\{ \quad \nu_1 \frac{ S(p,N)}{p} \lVert \mathcal{R}_h(u_k) \rVert_{p^*}^{p} +  \nu_2 \frac{ S(q,N)}{q} \lVert \mathcal{R}_h(v_k) \rVert_{q^*}^{q}  \\
& \quad - C \into |u_k|^{p^*-1} \left| \mathcal{R}_h(u_k) \right| \, dx - C \into |v_k|^{q^{*}\frac{p^*-1}{p^{*}}} \left| \mathcal{R}_h(u_k) \right| \, dx  \nonumber \\
&  \quad - C \into |u_k|^{p^*\frac{q^{*}-1}{q^{*}}} \left| \mathcal{R}_h(v_k) \right| \, dx - C \into |v_k|^{q^*-1} \left| \mathcal{R}_h(v_k) \right| \, dx  \Biggr\} \nonumber \\
& \leq  \quad C \into |u|^{p^*-1} \left| \mathcal{R}_h(u) \right| \, dx + C \into |v|^{q^{*}\frac{p^*-1}{p^{*}}} \left| \mathcal{R}_h(u) \right| \, dx \nonumber  \\
& \quad + C \into |u|^{p^*\frac{q^{*}-1}{q^{*}}} \left| \mathcal{R}_h(v) \right| \, dx + C \into |v|^{q^*-1} \left| \mathcal{R}_h(v) \right| \, dx \nonumber  \\
& \quad + 2C \into \left| \mathcal{R}_h(u) \right| \, dx  + 2C \into \left| \mathcal{R}_h(v) \right| \, dx + o_h(1). \nonumber
\end{align}
Let us now consider the term
$$ \into |u_k|^{p^*-1} \left| \mathcal{R}_h(u_k) \right| \, dx.$$
By H\"older inequality with  conjugated exponents $p^*/p$ and $p^*/(p^* - p)$, we have
\begin{align*}
\displaystyle
 &\into |u_k|^{p^*-1} \left| \mathcal{R}_h(u_k) \right| \, dx \\
= &  \into |u_k|^{p^*-p} \left| \mathcal{T}_h(u_k) + \mathcal{R}_h(u_k)  \right|^{p-1} \left| \mathcal{R}_h(u_k) \right| \, dx \\
\leq & c_p \left( \into |u_k|^{p^*-p} \left| \mathcal{T}_h(u_k)  \right|^{p-1} \left| \mathcal{R}_h(u_k) \right| \, dx + 
\into |u_k|^{p^*-p} \left| \mathcal{R}_h(u_k)  \right|^{p}  \, dx \right)  \\
 \leq & c_p \left( \into |u_k|^{p^*-p} \left| \mathcal{T}_h(u_k)  \right|^{p-1} \left| \mathcal{R}_h(u_k) \right| \, dx +  \into |u_0|^{p^*-p} \left| \mathcal{R}_h(u_k)  \right|^{p}  \, dx \right) \\
& \quad +  c_p \into |u_k - u_0|^{p^*-p} \left| \mathcal{R}_h(v_k)  \right|^{p}  \, dx \\
 \leq &  c_p \left( \into |u_k|^{p^*-p} \left| \mathcal{T}_h(u_k)  \right|^{p-1} \left| \mathcal{R}_h(u_k) \right| \, dx  + \into |u_0|^{p^*-p} \left| \mathcal{R}_h(u_k)  \right|^{p}  \, dx \right) \\
& \quad  +  c_p \lVert u_k - u_0 \rVert_{p^*}^{p^* - p} \lVert \mathcal{R}_h(u_k) \rVert_{p^*}^p.
\end{align*}
By Proposition \ref{weakconvLp}, we infer that $ \left| \mathcal{R}_h(u_k)  \right|^{p} \rightharpoonup \left| \mathcal{R}_h(u)  \right|^{p}$ in $L^{\frac{p^*}{p}}(\Omega)$. Since $|u_0|^{p^*-p} \in L^{ \frac{p^*}{p^*-p}}(\Omega)$, we have
$$ \displaystyle \into |u_0|^{p^*-p} \left| \mathcal{R}_h(u_k)  \right|^{p}  \, dx \to \into |u_0|^{p^*-p} \left| \mathcal{R}_h(u)  \right|^{p}  \, dx. $$
On the other hand, by dominated convergence theorem we have 
$$ \displaystyle \into |u_k|^{p^*-p} \left| \mathcal{T}_h(u_k)  \right|^{p-1} \left| \mathcal{R}_h(u_k) \right| \, dx \to \into |u|^{p^*-p} \left| \mathcal{T}_h(u)  \right|^{p-1} \left| \mathcal{R}_h(u) \right| \, dx.$$
Summing up, and denoting by $o_k(1)$ a quantity the goes to zero as $k \to \infty$, we have shown that 
\begin{align*}
\displaystyle
 & \into |u_k|^{p^*-1} \left| \mathcal{R}_h(u_k) \right| \, dx \\
\leq  & c_p \left( \into |u|^{p^*-p} \left| \mathcal{T}_h(v)  \right|^{q-1} \left| \mathcal{R}_h(u) \right| \, dx  + \into |u_0|^{p^*-p} \left| \mathcal{R}_h(u)  \right|^{p}  \, dx \right) \\
& + c_p \lVert u_k - u_0 \rVert_{p^*}^{p^* - p} \lVert \mathcal{R}_h(u_k) \rVert_{p^*}^p + o(1).
\end{align*}
Similarly, we have
\begin{align*}
\displaystyle
 & \into |v_k|^{q^*-1} \left| \mathcal{R}_h(v_k) \right| \, dx  \\
\leq  & c_q \left( \into |v|^{q^*-q} \left| \mathcal{T}_h(v)  \right|^{q-1} \left| \mathcal{R}_h(v) \right| \, dx  + \into |v_0|^{q^*-q} \left| \mathcal{R}_h(v)  \right|^{q}  \, dx \right) \\
& + c_q \lVert v_k - v_0 \rVert_{q^*}^{q^* - q} \lVert \mathcal{R}_h(v_k) \rVert_{q^*}^q + o(1).
\end{align*}
Let us now consider the term
$$ \displaystyle \into  |u_k|^{p^*\frac{q^{*}-1}{q^{*}}} \left| \mathcal{R}_h(v_k) \right| \, dx. $$
Let us denote by
$$ \displaystyle \gamma:= p^*\frac{q^{*}-1}{q^{*}} - \frac{p}{q'}=\frac{p^* \, q^* - p^* -  \frac{p}{q'}q^*}{q^*}, $$
We have
\begin{align*}
\displaystyle
& \into  |u_k|^{p^*\frac{q^{*}-1}{q^{*}}} \left| \mathcal{R}_h(v_k) \right| \, dx \\
= & \into  |u_k|^{\gamma} \left| \mathcal{T}_h(u_k) + \mathcal{R}_h(u_k) \right|^{\frac{p}{q'}} \left| \mathcal{R}_h(v_k) \right| \, dx \\
 \leq & c_{p,q} \left( \into  |u_k|^{\gamma} \left| \mathcal{T}_h(u_k) \right|^{\frac{p}{q'}} \left| \mathcal{R}_h(v_k) \right| \, dx  + \into  |u_k|^{\gamma} \left| \mathcal{R}_h(u_k) \right|^{\frac{p}{q'}} \left| \mathcal{R}_h(v_k) \right| \, dx      \right) \\
\leq &   c_{p,q} \left( \into  |u_k|^{\gamma} \left| \mathcal{T}_h(u_k) \right|^{\frac{p}{q'}} \left| \mathcal{R}_h(v_k) \right| \, dx  +  \into  |u_0|^{\gamma} \left| \mathcal{R}_h(u_k) \right|^{\frac{p}{q'}} \left| \mathcal{R}_h(v_k) \right| \, dx   \right. \\
& \quad +   \left. \into  |u_k - u_0|^{\gamma} \left| \mathcal{R}_h(u_k) \right|^{\frac{p}{q'}} \left| \mathcal{R}_h(v_k) \right| \, dx \right).  \\
\end{align*}
By applying the extended H\"older inequality with conjugated exponents $\displaystyle p^*/\gamma$, $\dis q'p^*/p$ and $q^*$, we have
$$ \displaystyle \into  |u_k - u_0|^{\gamma} \left| \mathcal{R}_h(u_k) \right|^{\frac{p}{q'}} \left| \mathcal{R}_h(v_k) \right| \, dx \leq \lVert  u_k - u_0 \rVert_{p^*}^\gamma \lVert \mathcal{R}_h(u_k) \rVert_{p^*}^{\frac{p}{q'}} \lVert \mathcal{R}_h(v_k) \rVert_{q^*}.$$
Moreover, by Young's inequality with conjugated exponents $q'$ and $q$, we have 
$$ \displaystyle \lVert \mathcal{R}_h(u_k) \rVert_{p^*}^{\frac{p}{q'}} \lVert \mathcal{R}_h(v_k) \rVert_{q^*} \leq \frac{1}{q'} 
\lVert \mathcal{R}_h(u_k) \rVert_{p^*}^{p} + \frac{1}{q} \lVert \mathcal{R}_h(v_k) \rVert_{q^*}^q,$$
whence
\begin{align*}
\displaystyle 
& c_{p,q} \into  |u_k - u_0|^{\gamma} \left| \mathcal{R}_h(u_k) \right|^{\frac{p}{q'}} \left| \mathcal{R}_h(v_k) \right| \, dx  \\
\leq  & \, c_{p,q} \lVert  u_k - u_0 \rVert_{p^*}^\gamma \left( \lVert \mathcal{R}_h(u_k) \rVert_{p^*}^{p} +  \lVert \mathcal{R}_h(v_k) \rVert_{q^*}^q  \right).
\end{align*}
Summing up, we obtained
\begin{align*}
\displaystyle
& \into  |u_k|^{p^*\frac{q^{*}-1}{q^{*}}} \left| \mathcal{R}_h(v_k) \right| \, dx  \\ 
\leq & c_{p,q} \left(\into  |u_k|^{\gamma} \left| \mathcal{T}_h(u_k) \right|^{\frac{p}{q'}} \left| \mathcal{R}_h(v_k) \right| \, dx  +  \into  |u_0|^{\gamma} \left| \mathcal{R}_h(u_k) \right|^{\frac{p}{q'}} \left| \mathcal{R}_h(v_k) \right| \, dx   \right) \\
& \; +  c_{p,q}\lVert  u_k - u_0 \rVert_{p^*}^\gamma \left( \lVert \mathcal{R}_h(u_k) \rVert_{p^*}^{p} +  \lVert \mathcal{R}_h(v_k) \rVert_{q^*}^q  \right) .  \\
\end{align*}
Observe now that the conjugated exponents $\displaystyle p^*/\gamma$, $\dis q'p^*/p$ and $q^*$, imply that $ |u_k(x)|^{\gamma} \left| \mathcal{T}_h(u_k(x)) \right|^{\frac{p}{q'}} \left| \mathcal{R}_h(v_k(x)) \right|  \leq w(x)$ for any $k$ and for almost every $x \in \Omega$, with $w \in L^1(\Omega)$. Therefore, by dominated convergence theorem we have 
$$ \displaystyle \into  |u_k|^{\gamma} \left| \mathcal{T}_h(u_k) \right|^{\frac{p}{q'}} \left| \mathcal{R}_h(v_k) \right| \, dx  \to \into  |u|^{\gamma} \left| \mathcal{T}_h(u) \right|^{\frac{p}{q'}} \left| \mathcal{R}_h(v) \right| \, dx.$$
Now, observe that the conjugated exponents $\displaystyle \frac{(p^*-\gamma)q^*}{(p^*-\gamma)q^*-p^*}$ and $\displaystyle \frac{(p^*-\gamma)q^*}{p^*}$ give that the sequence $\biggl\{ \left| \mathcal{R}_h(u_k) \right|^{\frac{p}{q'}} \left| \mathcal{R}_h(v_k) \right|    \biggr\}_k$ is bounded in $\displaystyle L^{\left( \frac{p^*}{\gamma} \right)'}(\Omega).$ Indeed, considering that there exist $\bar{u} \in L^{p^*}(\Omega)$ and $\bar{v} \in L^{q^*}(\Omega)$ such that $|u_k(x)| \leq \bar{u}(x)$ and $|v_k(x)| \leq \bar{v}(x)$ for any $k$ and for almost every $x \in \Omega$, we have 
\begin{align*}
\displaystyle 
\into \left(  \left| R_h(u_k) \right|^{\frac{p}{q'}} \left| R_h(v_k) \right| \right)^{\left( \frac{p^*}{\gamma} \right)'} \, dx  
&      \leq      \into  \left| \bar{u} \right|^{\frac{p}{q'} \frac{p^*}{p^*-\gamma}} \left| \bar{v} \right|^{\frac{p^*}{p^*-\gamma}} \, dx \\
& \leq       \lVert \bar{u}   \rVert_{p^*}^{\frac{p^*\left[(p^*-\gamma)q^*-p^*\right]}{(p^*-\gamma)q^*}} \lVert \bar{v}   \rVert_{q^*}^{\frac{p^*}{p^*-\gamma}}.  \\
\end{align*} 
By Proposition \ref{weakconvLp}, we deduce that $\left| \mathcal{R}_h(u_k) \right|^{\frac{p}{q'}} \left| \mathcal{R}_h(v_k) \right| \rightharpoonup  \left| \mathcal{R}_h(u) \right|^{\frac{p}{q'}} \left| \mathcal{R}_h(v) \right|$ in $\displaystyle L^{\left( \frac{p^*}{\gamma} \right)'}(\Omega).$ Since $|u_0|^{\gamma} \in L^{\frac{p^*}{\gamma} }(\Omega)$, we have
$$  \into  |u_0|^{\gamma} \left| \mathcal{R}_h(u_k) \right|^{\frac{p}{q'}} \left| \mathcal{R}_h(v_k) \right| \, dx  \to  \into  |u_0|^{\gamma} \left| \mathcal{R}_h(u) \right|^{\frac{p}{q'}} \left| \mathcal{R}_h(v) \right| \, dx. $$
Combining these results, we conclude that
\begin{align*}
\displaystyle
& \into  |u_k|^{p^*\frac{q^{*}-1}{q^{*}}} \left| \mathcal{R}_h(v_k) \right| \, dx  \\
 \leq &  c_{p,q} \left(\into  |u|^{\gamma} \left| \mathcal{T}_h(u) \right|^{\frac{p}{q'}} \left| \mathcal{R}_h(v) \right| \, dx  +  \into  |u_0|^{\gamma} \left| \mathcal{R}_h(u) \right|^{\frac{p}{q'}} \left| \mathcal{R}_h(v) \right| \, dx   \right) \\
& \quad +  c_{p,q} \lVert  u_k - u_0 \rVert_{p^*}^\gamma \left( \lVert \mathcal{R}_h(u_k) \rVert_{p^*}^{p} +  \lVert \mathcal{R}_h(v_k) \rVert_{q^*}^q  \right)  + o_k(1).  \\
\end{align*}
Arguing in a similar way on the term
$$ \displaystyle \into  |v_k|^{q^*\frac{p^{*}-1}{p^{*}}} \left| \mathcal{R}_h(u_k) \right| \, dx$$
and denoting by
$$ \displaystyle \delta := q^*\frac{p^{*}-1}{p^{*}} - \frac{q}{p'}=\frac{q^* \, p^* - q^* - \displaystyle \frac{q}{p'}p^*}{p^*}, $$
we get
\begin{align*}
\displaystyle
& \into  |v_k|^{q^*\frac{p^{*}-1}{p^{*}}} \left| \mathcal{R}_h(u_k) \right| \, dx  \\
\leq &  c_{p,q} \left(\into  |v|^{\delta} \left| \mathcal{T}_h(v) \right|^{\frac{q}{p'}} \left| \mathcal{R}_h(v) \right| \, dx  +  \into  |v_0|^{\delta} \left| \mathcal{R}_h(v) \right|^{\frac{q}{p'}} \left| \mathcal{R}_h(u) \right| \, dx   \right) \\
& \quad +   c_{p,q} \lVert  v_k - v_0 \rVert_{q^*}^\delta \left( \lVert \mathcal{R}_h(v_k) \rVert_{q^*}^{q} +  \lVert \mathcal{R}_h(u_k) \rVert_{p^*}^p  \right)  + o_k(1).  \\
\end{align*}
Altogether, denoting by $K:=K(p,q,C)$ the greatest positive constant in previous inequalities, by \eqref{calcoloPalaisSmale3} we deduce 
\begin{align*}
\displaystyle
\limsup_{k \to + \infty}  \; & \Biggl\{ \quad   \nu_1 \frac{S(p,N)}{p} \lVert \mathcal{R}_h(u_k) \rVert_{p^*}^{p}  - K \lVert u_k - u_0 \rVert_{p^*}^{p^* - p} \lVert \mathcal{R}_h(u_k) \rVert_{p^*}^p    \\
& \quad + \nu_2 \frac{S(q,N)}{q} \lVert \mathcal{R}_h(v_k) \rVert_{q^*}^{q} - K\lVert v_k - v_0 \rVert_{q^*}^{q^* - q} \lVert \mathcal{R}_h(v_k) \rVert_{q^*}^q  \\
& \quad  -K \lVert  u_k - u_0 \rVert_{p^*}^\gamma \left( \lVert \mathcal{R}_h(u_k) \rVert_{p^*}^{p} +  \lVert \mathcal{R}_h(v_k) \rVert_{q^*}^q  \right)  \\
& \quad -K\lVert  v_k - v_0 \rVert_{q^*}^\delta \left( \lVert \mathcal{R}_h(v_k) \rVert_{q^*}^{q} +  \lVert \mathcal{R}_h(u_k) \rVert_{p^*}^p  \right) \; \Biggr\} \quad  \\
\leq & \quad K \into |u|^{p^*-1} \left| \mathcal{R}_h(u) \right| \, dx + K \into |v|^{q^{*}\frac{p^*-1}{p^{*}}} \left| \mathcal{R}_h(u) \right| \, dx \\
& + K \into |u|^{p^*\frac{q^{*}-1}{q^{*}}} \left| \mathcal{R}_h(v) \right| \, dx + K \into |v|^{q^*-1} \left| \mathcal{R}_h(v) \right| \, dx \\
& + K \into \left| \mathcal{R}_h(u) \right| \, dx  + K \into \left| \mathcal{R}_h(v) \right| \, dx + o_h(1) \\
& + K \left( \into |u|^{p^*-p} \left| \mathcal{T}_h(u)  \right|^{p-1} \left| \mathcal{R}_h(u) \right| dx  + \into |u_0|^{p^*-p} \left| \mathcal{R}_h(u)  \right|^{p}   dx \right) \\
& + K \left( \into |v|^{q^*-q} \left| \mathcal{T}_h(v)  \right|^{q-1} \left| \mathcal{R}_h(v) \right| dx  + \into |v_0|^{q^*-q} \left| \mathcal{R}_h(v)  \right|^{q}  dx \right) \\
& + K \left(\into  |u|^{\gamma} \left| \mathcal{T}_h(u) \right|^{\frac{p}{q'}} \left| \mathcal{R}_h(v) \right| \, dx  +  \into  |u_0|^{\gamma} \left| \mathcal{R}_h(u) \right|^{\frac{p}{q'}} \left| \mathcal{R}_h(v) \right| \, dx   \right)        \\
& + K \left(\into  |v|^{\delta} \left| \mathcal{T}_h(v) \right|^{\frac{q}{p'}} \left| \mathcal{R}_h(v) \right| \, dx  + \into  |v_0|^{\delta} \left| \mathcal{R}_h(v) \right|^{\frac{q}{p'}} \left| \mathcal{R}_h(u) \right| \, dx   \right)       .\\
\end{align*}
Taking into account the left side of previous inequality, we have
\begin{align*}
\displaystyle
& \nu_1 \, \frac{S(p,N)}{p} \lVert \mathcal{R}_h(u_k) \rVert_{p^*}^{p}  - K \lVert u_k - u_0 \rVert_{p^*}^{p^* - p} \lVert \mathcal{R}_h(u_k) \rVert_{p^*}^p    \\
+ & \nu_2 \, \frac{S(q,N)}{q} \lVert \mathcal{R}_h(v_k) \rVert_{q^*}^{q} - K\lVert v_k - v_0 \rVert_{q^*}^{q^* - q} \lVert \mathcal{R}_h(v_k) \rVert_{q^*}^q  \\
- & K \lVert  u_k - u_0 \rVert_{p^*}^{\gamma} \left( \lVert \mathcal{R}_h(u_k) \rVert_{p^*}^{p} +  \lVert \mathcal{R}_h(v_k) \rVert_{q^*}^q  \right)  \\
- & K\lVert  v_k - v_0 \rVert_{q^*}^{\delta} \left( \lVert \mathcal{R}_h(v_k) \rVert_{q^*}^{q} +  \lVert \mathcal{R}_h(u_k) \rVert_{p^*}^p  \right)\\
= & \left[ \nu_1 \, \frac{S(p,N)}{p} - K \left( \lVert u_k - u_0 \rVert_{p^*}^{p^* - p}  + \lVert  u_k - u_0 \rVert_{p^*}^{\gamma} + \lVert  v_k - v_0 \rVert_{q^*}^{\delta} \right)    \right] \lVert \mathcal{R}_h(u_k) \rVert_{p^*}^{p} \\
+ & \left[\nu_2 \,  \frac{S(q,N)}{q} - K \left( \lVert v_k - v_0 \rVert_{q^*}^{q^* - q}  + \lVert  v_k - v_0 \rVert_{q^*}^{\delta} + \lVert  u_k - u_0 \rVert_{p^*}^{\gamma} \right)   \right] \lVert \mathcal{R}_h(v_k) \rVert_{q^*}^{q}.
\end{align*}
Now, let us recall that
$$ \displaystyle \overline{B_R(z_0)}:=\left\lbrace z \in X \; : \; \lVert z - z_0 \rVert \leq R \right\rbrace$$
where $R>0$ is a general radius.
Given $0<M < \min \left\lbrace \nu_1 \, \frac{S(p,N)}{p} , \nu_2 \, \frac{S(q,N)}{q}    \right\rbrace$ and considering that $\{z_k\}_k=\{(u_k,v_k)\}_k \subset \overline{B_R(z_0)} $, we can choose $R$ small enough such that for any $k$ we have
$$ \displaystyle  \nu_1 \, \frac{S(p,N)}{p} - K \left( \lVert u_k - u_0 \rVert_{p^*}^{p^* - p}  + \lVert  u_k - u_0 \rVert_{p^*}^{\gamma} + \lVert  v_k - v_0 \rVert_{q^*}^{\delta} \right) \geq M $$
and 
$$ \displaystyle \nu_2 \, \frac{S(q,N)}{q} - K \left( \lVert v_k - v_0 \rVert_{q^*}^{q^* - q}  + \lVert  v_k - v_0 \rVert_{q^*}^{\delta} + \lVert  u_k - u_0 \rVert_{p^*}^{\gamma} \right)\geq M. $$
Therefore,
\begin{align*}
\displaystyle
 M & \limsup_{k \to + \infty} \left\lbrace \lVert \mathcal{R}_h(u_k) \rVert_{p^*}^p \right.  + \left. \lVert \mathcal{R}_h(v_k) \rVert_{q^*}^q \right\rbrace \\
& \leq  K \into |u|^{p^*-1} \left| \mathcal{R}_h(u) \right| \, dx + K \into |v|^{q^{*}\frac{p^*-1}{p^{*}}} \left| \mathcal{R}_h(u) \right| \, dx \\
& + K \into |u|^{p^*\frac{q^{*}-1}{q^{*}}} \left| \mathcal{R}_h(v) \right| \, dx + K \into |v|^{q^*-1} \left| \mathcal{R}_h(v) \right| \, dx \\
& + K \into \left| \mathcal{R}_h(u) \right| \, dx  + K \into \left| \mathcal{R}_h(v) \right| \, dx + o_h(1) \\
& + K \left( \into |u|^{p^*-p} \left| \mathcal{T}_h(u)  \right|^{p-1} \left| \mathcal{R}_h(u) \right| dx  + \into |u_0|^{p^*-p} \left| \mathcal{R}_h(u)  \right|^{p}   dx \right) \\
& + K \left( \into |v|^{q^*-q} \left| \mathcal{T}_h(v)  \right|^{q-1} \left| \mathcal{R}_h(v) \right| dx  + \into |v_0|^{q^*-q} \left| \mathcal{R}_h(v)  \right|^{q}  dx \right) \\
& + K \left(\into  |u|^{\gamma} \left| \mathcal{T}_h(u) \right|^{\frac{p}{q'}} \left| \mathcal{R}_h(v) \right| \, dx  +  \into  |u_0|^{\gamma} \left| \mathcal{R}_h(u) \right|^{\frac{p}{q'}} \left| \mathcal{R}_h(v) \right| \, dx   \right)        \\
& + K \left(\into  |v|^{\delta} \left| \mathcal{T}_h(v) \right|^{\frac{q}{p'}} \left| \mathcal{R}_h(v) \right| \, dx  + \into  |v_0|^{\delta} \left| \mathcal{R}_h(v) \right|^{\frac{q}{p'}} \left| \mathcal{R}_h(u) \right| \, dx   \right)       .
\end{align*}
By applying dominated convergence theorem (with respect to $h$) to the right side of previous inequality, we obtain that 
$$ \displaystyle \lim_{h \to + \infty} \, \limsup_{k \to + \infty}  \left(\lVert \mathcal{R}_h(u_k) \rVert_{p^*}^p + \lVert \mathcal{R}_h(v_k) \rVert_{q^*}^q \right) =0,$$
by which we deduce
$$ \displaystyle \lim_{h \to + \infty} \, \limsup_{k \to + \infty}  \; \lVert \mathcal{R}_h(u_k) \rVert_{p^*}  = 0$$ 
and
$$ \displaystyle  \lim_{h \to + \infty} \, \limsup_{k \to + \infty}   \; \lVert \mathcal{R}_h(v_k) \rVert_{q^*} = 0.  $$ 
Now, observing that 
$$ \displaystyle \lVert u_k - u \rVert_{p^*} \leq \lVert \mathcal{T}_h(u_k) - \mathcal{T}_h(u) \rVert_{p^*} + \lVert \mathcal{R}_h(u_k) \rVert_{p^*} + \lVert \mathcal{R}_h(u) \rVert_{p^*},$$
we have
$$ \displaystyle \limsup_{k \to + \infty} \, \lVert u_k - u \rVert_{p^*} \leq  \limsup_{k \to + \infty} \, \lVert \mathcal{R}_h(u_k) \rVert_{p^*} + \lVert \mathcal{R}_h(u) \rVert_{p^*}.$$
By dominated convergence theorem, we have 
$$ \displaystyle \lim_{h \to +  \infty}  \, \lVert \mathcal{R}_h(u) \rVert_{p^*}=0, $$
hence we deduce 
$$ \displaystyle \lVert u_k - u \rVert_{p^*} \to 0 ,$$
and similarly
$$ \displaystyle \lVert v_k - v \rVert_{q^*} \to 0. $$
Finally, we infer that
$$ \displaystyle \into \left[ H_s(\delta,x,u_k,v_k) (u_k - u) + H_t(\delta, x,u_k,v_k)(v_k - v) \right] \, dx \to 0,$$
and coming back to \eqref{ipotesiHapbq} we deduce  
\begin{equation*}
\displaystyle \limsup_{k \to + \infty} \, \langle \mathcal{H}_{\Psi_1,\Psi_2}(z_k),z_k-z    \rangle
\leq 0.
\end{equation*}
By applying Proposition \ref{HPsi1Psi2S+}, we conclude that $\lVert z_k - z \rVert \to 0$.
\qed

\bigskip
\section{PROOF OF THEOREM ~\ref{TheorFinGrupCrit}.~}\label{finitezzagruppicritici}

\medskip
\noindent
In this Section, we prove Theorem \ref{TheorFinGrupCrit}, that is each isolated critical point of the functional $I_{\delta,\Psi_1,\Psi_2}$ defined in \eqref{funzionale0} and associated to system \eqref{AnisotropicSystem}, has critical groups of finite type and that the  Poincaré-Hopf formula holds.
As mentioned in the Introduction, the proof is a straightforward consequence of the Theorem \ref{IprimodeltaPsi1Psi2S+} and \cite[Theorem 1.1]{CD1}. However, for the reader's convenience  we retrace the main ideas from  \cite[Theorem 1.1]{CD1} that yield the result.

\medskip
\noindent
{\mbox {\it Proof of Theorem~\ref{TheorFinGrupCrit}.~}} 
Let $\bar{z}$ be an isolated critical point of the functional $I_{\delta,\Psi_1,\Psi_2}$ defined in \eqref{funzionale0}. Moreover, taking into account that $I_{\delta,\Psi_1,\Psi_2}: X \to \mathbb{R}$ is of class $C^1$ and that, by Theorem \ref{IprimodeltaPsi1Psi2S+}, its derivative is locally of class $(S)_+$, there exist $L,R>0$ such that $I_{\delta,\Psi_1,\Psi_2}$ is Lipschitz continuous with constant $L$ on $\overline{B_{4R}(\bar{z})}$, $I_{\delta,\Psi_1,\Psi_2}'$ is of class $(S)_+$ on $\overline{B_{4R}(\bar{z})}$, and $I_{\delta,\Psi_1,\Psi_2}'(z) \neq 0$ for every $z \in \overline{B_{4R}(\bar{z})} \setminus \{ \bar{z} \}$. In particular,  by applying \cite[Lemma 4.1, Lemma 4.2 and Lemma 4.3]{CD1}, there exist $\sigma \in [0,L]$ and a finite dimensional subspace $Y$ of $X$, with $\bar{z} \in Y$, such that, setting 
\begin{align*}
\displaystyle
\upbeta(z) & := I_{\delta,\Psi_1,\Psi_2}(\bar{z}) + \sigma \left( R  - ( \lVert z - \bar{z} \rVert - R)^+    \right),\\
\Sigma   & :=  \left\lbrace z \in X \, : \quad I_{\delta,\Psi_1,\Psi_2}(\bar{z}) - 2 \sigma R \leq I_{\delta,\Psi_1,\Psi_2}(z) \leq \upbeta(z) \right\rbrace ,\\
A        & := \left\lbrace z \in \Sigma \, : \quad I_{\delta,\Psi_1,\Psi_2}(z) \leq I_{\delta,\Psi_1,\Psi_2}(\bar{z}) - \sigma R \right\rbrace,
\end{align*}
the following facts hold:
\begin{itemize}
\item[•] $H_{*} ( \Sigma, A)$ is of finite type;
\item[•] for any  $m \geq 0$, we have
\begin{equation*}
\displaystyle H_{m} ( \Sigma, A) \approx H_{m} ( \Sigma \cap Y, A \cap Y).
\end{equation*}
\end{itemize}
By applying  \cite[Proposition 3.2]{CD1} and \cite[Proposition 4.1]{CD1}, we also deduce 
\begin{equation*}
\mathrm{deg}\left((I_{\delta,\Psi_1,\Psi_2}\bigl|_{U \cap Y} )',B_R(\bar{z}) \cap Y,0 \right) = \sum_{m\geq 0}
(-1)^m\,\rnk{H_{m} ( \Sigma \cap Y, A \cap Y)} 
\end{equation*}
and
\begin{equation*}
\displaystyle C_m(I_{\delta,\Psi_1,\Psi_2},\bar{z}) \approx H_m(\Sigma, A) \qquad \text{ for any } m \geq 0.
\end{equation*}
Moreover, the definition and properties of the degree for maps of class $(S)_+$ give that
\begin{equation*}
\mathrm{deg}\left(I_{\delta,\Psi_1,\Psi_2}',B_R(\bar{z}),0 \right)=\mathrm{deg}\left((I_{\delta,\Psi_1,\Psi_2}\bigl|_{U \cap Y} )',B_R(\bar{z}) \cap Y,0 \right).
\end{equation*}
Altogether, we infer that
\begin{equation*}
\mathrm{deg}\left(I_{\delta,\Psi_1,\Psi_2}',B_R(\bar{z}),0 \right) = \sum_{m\geq 0}
(-1)^m\,\rnk{ C_m(I_{\delta,\Psi_1,\Psi_2},\bar{z})  }.
\end{equation*}
\qed

\bigskip
\noindent
{\bf Acknowledgments.}\\
The author is supported by INdAM-GNAMPA and INdAM-GNAMPA project {\sl “Metodi variazionali e topologici tra Fisica, Geometria e Scienze Applicate''} (CUP E53C25002010001).

\end{document}